\renewcommand{\emph}[1]{\textit{#1}}
\newcommand{\old}[1]{}
\def\cA{{\mathcal A}}
\def\cC{{\mathcal C}}
\def\cD{{\mathcal D}}
\def\cE{{\mathcal E}}
\def\cS{{\mathcal S}}
\def\cC{{\mathcal C}}
\def\cH{{\mathcal H}}
\newcommand{\set}[1]{\left\{#1\right\}}
\def\cS{\mathcal{S}}
\def\cP{\mathcal{P}}
\def\ii_(#1,#2){i_{#1}^{#2}}
\def\bx{{\bf x} }
\def\bd{{\bf d} }
\def\cM{\mathcal{M}}
\def\d{\delta}
\def\z{\zeta}
\def\l{\lambda}
\def\1{{\bf 1}}
\def\0{{\bf 0}}
\def\pr{\mathbb{P}}
\newcommand\bfrac[2]{\left(\frac{#1}{#2}\right)}
\def\bx{{\bf x} }
\newtheorem{theorem}{Theorem}[section]
\newtheorem{lemma}[theorem]{Lemma}
\newtheorem{claim}[theorem]{Claim}
\title{Packing Hamilton Cycles in Cores of Random Graphs}
\author{Michael Anastos}
\begin{document}
\maketitle
\begin{abstract}
Consider the random graph process $\{G_t\}_{t\geq 0}$.
For $k\geq 3$ let $G_{t}^{(k)}$ denote the $k$-core of $G_t$ and let
$\tau_k$ be the minimum $t$ such that the $k$-core of $G_t$ is nonempty.
It is well known that w.h.p.\footnote{We say a sequence of events $\mathcal{E}_n$ holds \emph{with high probability} (w.h.p. for brevity) if $\lim_{n\to \infty} \mathbb{P}(\mathcal{E}_n)=1$.}\@ for $G_{\tau_k}^{(k)}$ has linear size while it is believed to be Hamiltonian. Bollob\'{a}s, Cooper, Fenner and Frieze further conjectured that  w.h.p.\@ $G_{t}^{(k)}$ spans $\lfloor \frac{k-1}{2} \rfloor$ edge-disjoint Hamilton cycles plus, when $k$ is even, a perfect matching for $t\geq \tau_k$. 
We prove that  w.h.p.\@   if $k$ is odd then $G_{t}^{(k)}$ spans $\frac{k-3}{2}$ edge disjoint Hamilton cycles plus an additional 2-factor whereas if $k$ is even  then it spans $\frac{k-2}{2}$ edge disjoint Hamilton cycles plus an additional matching of size $n/2-o(n)$ for $t\geq \tau_k$. In particular  w.h.p.\@ $G_{t}^{(k)}$ is Hamiltonian for $k\geq 4$ and $t\geq \tau_k$. 
This improves upon results of Krivelevich, Lubetzky and Sudakov.
\end{abstract}
\section{Introduction}
Hamilton  cycles is a fundamental object in  graph  theory and it has been studied in both the deterministic and the stochastic setting. One of the first stochastic settings in which the threshold for Hamiltonicity was determined is the  random graph process $\{G_t\}_{t\geq 0}$. $\{G_t\}_{t\geq 0}$ is generated by starting with $G_0$ being the empty graph. Thereafter for $i\geq 1$, given $G_{i-1}$,  $G_i$ is formed by adding to $G_{i-1}$ an edge that is chosen uniformly at random from $\binom{[n]}{2}\setminus E(G_{i-1})$. 

Let 
$$\tau_2=\min\{t:\delta(G_t)=2\}.$$ 
$t\geq \tau_2$ is definitely a necessary condition for $G_t$ to be Hamiltonian. Ajtai, Koml\'{o}s and Szemer\'{e}di \cite{AKS} and Bollob\'{a}s \cite{Bol}, building upon  work of  Korshunov \cite{Kor}, P\'{o}sa \cite{Posa} and Koml\'{o}s and 
Szemer\'{e}di \cite{Komlos},  proved that w.h.p.\@ $G_{\tau_2}$ is Hamiltonian.

W.h.p. $\tau_2=(1+o(1))n \log n/2$. Thus to achieve Hamiltonicity in the random graph process one has to wait until the average degree becomes $(1+o(1))\log n$. 
In order to ``speed up" the appearance of a Hamilton cycle it is natural to consider models of random graphs that ensure that the minimum degree is at least 2. Such a model of random graphs is $G_{n,m}^{\delta\geq k}$, a graph that is chosen uniformly at random from all the graphs on $n$ vertices  with $m\geq kn/2$ edges and minimum degree $k$. Taking $k=2$ does not suffices for  $G_{n,m}^{\delta\geq k}$ to be Hamiltonian when $m=o(n\log n)$. Indeed for $\epsilon>0$ and $m \leq (\frac{1}{6}-\epsilon)n \log n$ w.h.p.\@ $G_{n,m}^{\delta \geq 2}$ contains a 3-spider i.e. a vertex of degree 3 that is incident to 3 vertices of degree 2. Inherently w.h.p.\@ $G_{n,m}^{\delta \geq 2}$ is not Hamiltonian. On the other hand Anastos and Frieze prove that taking $k=3$ and $m\geq 2.67n$ suffices \cite{AF}.    

For a graph $G$ we say that $G\in \cA_k$ if $G$ spans $\lfloor\frac{k-1}{2}\rfloor$ edge disjoint Hamilton cycles plus, when $k$ is even, a perfect matching.
Bollob\'{a}s, Cooper, Fenner and Frieze \cite{BCFF} considered $G_{n,m}^{\d \geq k}$ for $k\geq 3$ and proved the following Theorem.
\begin{theorem}\label{thm:BCFF}
Let $k\geq 3$. There exists a constant $C_k\leq 2(k+1)^3$ such that if $2m\geq C_kn$ then w.h.p.\@ $G_{n,m}^{\d\geq k}\in \cA_k$.  
\end{theorem}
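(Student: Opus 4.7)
The plan is to expose $G_{n,m}^{\delta \geq k}$ in two stages: a base stage that extracts $\lfloor (k-1)/2 \rfloor$ edge-disjoint $2$-factors (and, when $k$ is even, a near-perfect matching), and a booster stage that uses $\Omega(n)$ reserved random edges to convert each $2$-factor into a single Hamilton cycle via P\'osa rotations.

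First I would condition on the degree sequence $\mathbf{d}=(d_1,\ldots,d_n)$. For $2m \geq C_k n$ with $C_k$ sufficiently large, $\mathbf{d}$ concentrates well: the average degree is $\Theta(C_k)$, and the set $V_k=\{i : d_i=k\}$ of minimum-degree vertices has linear size. Given $\mathbf{d}$, the graph is uniform over simple graphs with that sequence, which I would analyse through the configuration model (paying the standard penalty to condition on simplicity). I would then randomly partition the edge set into $E_1 \dun E_2$ with $|E_2| = \Theta(n)$, arranged so that the graph on $E_1$ retains minimum degree $\geq k-1$ while $E_2$ remains a uniformly random set of edges independent of the structure of $E_1$.

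In the base stage, I would iteratively extract $2$-factors from the subgraph on $E_1$ using a flow or Hall-type argument on the bipartite double cover of the current residual graph; the residual minimum degree stays bounded away from zero and the density remains linear throughout, so each successive factor exists. When $k$ is even, one of these factors is replaced by a near-perfect matching, with any unmatched vertices absorbed by a handful of $E_2$ edges. In the booster stage, I would process each $2$-factor $F = C_1 \cup \cdots \cup C_r$ independently. To reduce $r$ by one, I would break $C_1$ at an edge to obtain a path $P$, apply P\'osa rotations on $P$ using only edges of $E_2$ (keeping the remainder of $F$ intact), and show that the set of rotation-reachable endpoints grows to $\Omega(n)$; with high probability at least one such endpoint is $E_2$-adjacent to a vertex of some $C_j$ with $j \geq 2$, yielding a cycle merge. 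Iterating drives $r$ down to $1$.

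The main obstacle is ensuring that $E_2$ supplies genuinely independent randomness across all $\lfloor (k-1)/2\rfloor$ layers and across all rotations within a layer. The P\'osa argument yields an endpoint set of size $\Omega(n)$ only if the subgraph on the currently reachable vertices expands, and this must hold conditionally on everything already revealed; bounding the bad conditioning events by a union bound forces $|E_2|$ to be linear with a coefficient polynomial in $k$, which is what drives the cubic bound $C_k \leq 2(k+1)^3$. A second delicate point is the treatment of vertices in $V_k$: after the packing, such a vertex has essentially no spare degree, so the base-stage flow argument must be tailored to respect their rigidity and to incorporate them into every factor simultaneously.
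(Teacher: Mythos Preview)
The paper does not contain a proof of this theorem: it is quoted from Bollob\'as, Cooper, Fenner and Frieze \cite{BCFF} as background. Everything that follows in the paper (Theorems~\ref{thm:ham} and~\ref{thm:hamCores}) is aimed at the \emph{relaxed} property $\cA_k'$ and at the full range $t\geq\tau_k$, not at re-deriving the original $\cA_k$ result of \cite{BCFF}. So there is nothing here to compare your proposal against.

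That said, your outline is recognisably in the same family as the argument the paper runs for its own theorems: a two-stage exposure, a large $(k-1)$-matching extracted from the base stage (here via Theorem~\ref{thm:kGreedy2} rather than a flow argument), and P\'osa rotations fed by a reserved near-uniform edge set to upgrade path covers to Hamilton cycles (Lemma~\ref{lem:builtingmatch}). Two points where your sketch diverges from what is actually done here: first, the paper does not extract genuine $2$-factors in the base stage but only near-perfect matchings, and the shortfall is what forces the weaker conclusion $\cA_k'$; second, the reserved edge set $E_R$ is not produced by a naive random split but by the machinery of Theorem~\ref{thm:kGreedy2}, precisely because getting $E_R$ to be uniform while keeping the minimum degree high on the complement is nontrivial. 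Your final paragraph correctly identifies both of these as the delicate points, but the hand-wave ``arranged so that $E_2$ remains uniformly random'' hides real work, and your explanation of the cubic $2(k+1)^3$ as coming from a union bound over layers is plausible but not substantiated.
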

For $c = O(1)$, w.h.p. in $G_{n,cn}^{\delta\geq k}$ there exist vertices whose neighborhood contains $(k+1)$ vertices of degree $k$. Hence the number of edge disjoint Hamilton cycles in the above Theorem is optimal.

A graph that is known to be distributed as $G_{n,m}^{\delta\geq k}$ is the $k$-core of an element of the random graph process. For $k\geq 3$ and a graph $G$ denote by $G^{(k)}$ the $k$-core of $G $ i.e. the maximal subgraph of $G$ of minimum degree $k$.
%This concept was introduced by Bollob\'{a}s \cite{Bol} and it is well studied. 
{\L}uczak showed that the size of $G_t^{(k)}$ goes through a phase transition similar to that of the size of the giant component \cite{l}. Specifically  he showed that  $G_{\tau_k}^{(k)}$ spans either 0 or a constant proportion of the vertices of $G_{\tau_k}$. Later, Pittel, Spencer and Wormald \cite{PSW} established the threshold of $V(G_t^{(k)})\neq \emptyset$ to be at $t=  (c_k/2) n$ where  $c_k=k+\sqrt{k\log k}+o(\sqrt{k})$.
In \cite{BCFF}, Bollob\'{a}s, Cooper, Fenner and Frieze conjecture that Theorem \ref{thm:BCFF}  should extend to every $G_{t}^{ (k)}$ for $t\geq \tau_k$. The first result towards this direction was given by Krivelevich, Lubetzky and Sudakov (see \cite{KLS}). They proved that for $k\geq 15$ w.h.p. $G_{\tau_k}^{(k)}$ is Hamiltonian for  $t\geq \tau_k$. In addition they prove that there exists $k_0$, such that if $k\geq k_0$ then w.h.p.\@ $G_{t}^{(k)}$ spans $\lfloor \frac{k-3}{2} \rfloor$ edge disjoint Hamilton cycles for $k\geq k_0$ and $t\geq \tau_k$.

A 2-factor of a graph $G$ is a 2-regular spanning subgraph of $G$. In this paper we study a very closely related to $\cA_k$ property which we call $\cA_k'$. We say that a graph $G$ has the property $\cA_k'$ if
\begin{itemize}
    \item[(i)] when $k$ is odd, it spans $\frac{k-3}{2}$ pairwise edge-disjoint Hamilton cycles plus a 2-factor,
    \item[(ii)] when $k$ is even, it spans $\lfloor \frac{k-1}{2} \rfloor$ pairwise edge-disjoint Hamilton cycles plus a matching of size $n/2-o(n)$.
\end{itemize}
Hence, $\cA_k'$ is a relaxation of  $\cA_k$ where when $k$ is odd we substitute the last Hamilton cycle with a 2-factor while when $k$ is even we allow for a slightly smaller matching.

The main Theorems of this paper are the following:
\begin{theorem}\label{thm:ham}
Let $4\leq k=O(1)$ and $k/2< c =O(1)$.  Then, $$\pr\big(G_{n,cn}^{\delta\geq k} \in \cA_k'\big)=1-o(n^{-1}).$$
\end{theorem}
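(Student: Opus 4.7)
The plan is a two-phase construction: in phase one extract $(k-3)/2$ (odd $k$) or $(k-2)/2$ (even $k$) edge-disjoint Hamilton cycles by a P\'{o}sa-style rotation--extension argument on a carefully revealed subgraph, and in phase two produce the remaining 2-factor or near-perfect matching from the leftover edges. To keep the residual distributions tractable, I would couple $G_{n,cn}^{\delta\geq k}$ with a pairing model on $2cn$ stubs constrained to produce minimum degree $k$, and at every vertex partition its stubs into a ``working'' bundle of size $k-3$ or $k-2$ and a ``reserve'' bundle of the remaining stubs (of size at least $3$ or $2$ respectively). The working pairing is exposed first as $G_1$, the reserve pairing afterwards as $G_2$; this gives a clean separation between the Hamilton-cycle budget and the final 2-factor / matching budget.

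Phase one processes $G_1$ iteratively. At round $i$ I set aside a random ``booster'' subset $B_i$ of $\omega(\log n)$ unexposed edges, establish the required $(\log n)$-expansion of the current remainder $G_1\setminus(H_1\cup\cdots\cup H_{i-1})$, and apply P\'{o}sa rotations starting from a longest path to generate $\Theta(n)$ candidate endpoint pairs, one of which is closed through an edge of $B_i$. The key observation is that after removing $H_1,\ldots,H_{i-1}$ the remaining edges are still distributed as a uniform pairing conditional on an updated degree sequence, so the expansion hypothesis is preserved across rounds. Each round is shown to succeed with probability $1-o(n^{-2})$, and a union bound over the $O(1)$ rounds yields all Hamilton cycles with probability $1-o(n^{-1})$.

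Phase two works with the uncovered graph $G'$, which has minimum degree at least $3$ (odd $k$) or $2$ (even $k$) and is close in distribution to a uniform pairing on the reserve stubs. For odd $k$ I would extract a 2-factor by encoding it as a perfect matching in the bipartite ``double cover'' of $G'$, with Hall's condition verified from the expansion implied by $\delta(G')\geq 3$ together with the absence of small dense subgraphs in a random pairing. For even $k$ I would build a matching of size $n/2-o(n)$ by extracting the 2-core of $G'$, alternating along its long cycles and paths, and invoking Tutte--Berge with a second-moment bound on small odd components to force the deficit down to $o(n)$.

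The main obstacle is achieving the sharp $1-o(n^{-1})$ failure probability across the $O(1)$ cycle removals. The distributional control needed after each Hamilton-cycle deletion would collapse under naive arguments, so one must exploit (i) the exchangeability of the pairing model to reduce the residual graph to a conditioned pairing on the updated degree sequence at each stage, and (ii) a Freedman-type martingale inequality on the rotation--extension closure count, in place of the usual Chebyshev bound, in order to replace the $o(1)$ failure estimate at each round by $o(n^{-2})$. A secondary difficulty is ruling out a linear-sized deficit in the even-$k$ matching step, which I expect to handle via a dedicated switching argument on small odd components of the reserve pairing, exploiting the min-degree-$k$ constraint of the parent graph.
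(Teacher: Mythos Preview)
Your proposal has two structural gaps that would prevent the argument from going through as written.

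First, the ``key observation'' that after removing $H_1,\ldots,H_{i-1}$ the remaining edges are still distributed as a uniform pairing on the residual degree sequence is false. The Hamilton cycle $H_i$ is found by rotation--extension, which requires exposing essentially all of $G_1$; once exposed, $H_i$ is a deterministic and highly non-symmetric function of the pairing, and conditioning on its removal does not return a uniform pairing. Exchangeability of stubs tells you only that the residual graph has the right \emph{marginal} degree sequence, not that it is uniform given that sequence. The paper sidesteps this entirely: it invokes Theorem~\ref{thm:kGreedy2} (imported from~\cite{A}) to extract, once and up front, a set $E_R$ of $\Theta(n/\log\log n)$ edges that is genuinely uniform over a prescribed subset of $\binom{[n]}{2}$, and then partitions $E_R$ into booster batches $R_1,\dots,R_{k-1}$, one per round. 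The expansion properties of Lemma~\ref{lem:expansion} are established once for $G$ and are monotone under edge deletion, so they hold automatically for every $G'_i=G'\setminus(H_1\cup\cdots\cup H_{i-1})$; no distributional control on $G'_i$ is ever needed beyond the deterministic minimum-degree bound. The $1-o(n^{-1})$ success probability then comes from a straightforward binomial tail on the $\Theta(n/\log\log n)$ booster edges in each $R_i$, not from a Freedman-type martingale.

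Second, your stub split makes $G_1$ exactly $(k-3)$-regular (odd $k$) or $(k-2)$-regular (even $k$), and the number of Hamilton cycles you ask of it equals half its degree. You are therefore demanding a full Hamilton \emph{decomposition} of a random regular multigraph, which is far stronger than what rotation--extension can deliver: by the last round the residual graph has every degree equal to $2$, so there are no rotations, no boosters, and certainly no $1-o(n^{-1})$ guarantee. The paper never isolates a regular working graph. Instead it peels $k-1$ near-perfect matchings $M_1,\dots,M_{k-1}$ from a large $(k-1)$-matching via Tutte--Berge (Lemma~\ref{lem:pillmatch}), and in round $i$ starts from the $2$-matching $M_{2i-1}\cup M_{2i}$ inside the full graph $G'\setminus(H_1\cup\cdots\cup H_{i-1})$, which retains minimum degree at least $4$ throughout; careful bookkeeping on $|E(H_i)\setminus(M_{2i-1}\cup M_{2i})|$ ensures only $o(n)$ edges are stolen from later matchings. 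The final $2$-factor (odd $k$) is obtained not by a bipartite double-cover Hall argument but by augmenting the leftover $2$-matching along alternating paths using the last batch $R_{k-1}$ of random edges (Lemma~\ref{lem:builting2factor}); your double-cover approach would need a perfect matching in a bipartite graph of minimum degree $3$, which does not follow from expansion alone at this density.
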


\begin{theorem}\label{thm:hamCores}
Let $4\leq k=O(1)$. Then w.h.p. 
$G_{t}^{(k)} \in \cA_k'$  for $t\geq \tau_k$.
\end{theorem}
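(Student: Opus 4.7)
The plan is to deduce Theorem~\ref{thm:hamCores} from Theorem~\ref{thm:ham} by conditioning on the vertex set and edge count of the core. A standard symmetry argument gives that, conditional on $V(G_t^{(k)})=V$ and $e(G_t^{(k)})=m$, the core $G_t^{(k)}$ is uniformly distributed over graphs on $V$ with $m$ edges and minimum degree $\geq k$, so it is distributed as $G_{|V|,m}^{\delta\geq k}$. The $1-o(n^{-1})$ bound in Theorem~\ref{thm:ham} (rather than the usual w.h.p.) is what makes a union bound over $t$ affordable.

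Two monotonicity facts organize the argument. First, $V(G_t^{(k)})$ is non-decreasing in $t$, so it takes at most $n$ distinct values as $t$ ranges over $[\tau_k,\infty)$. Second, $\cA_k'$ is preserved when edges are added to a fixed vertex set, since the same Hamilton cycles and $2$-factor/matching continue to witness it. Together these reduce the problem to verifying $\cA_k'$ at the $\leq n$ jump times of $V(G_t^{(k)})$. Fix a large constant $K$. For each jump time $t$ with $e(G_t^{(k)})\leq Kn$, condition on $(V(G_t^{(k)}),e(G_t^{(k)}))$ and apply Theorem~\ref{thm:ham}: using $|V(G_t^{(k)})|=\Theta(n)$ w.h.p.\ by Pittel--Spencer--Wormald and $c=e/|V|=O(1)$, the failure probability is $o(n^{-1})$, and summing over $\leq n$ such jumps gives $o(1)$.

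The remaining difficulty is jump times with $e(G_t^{(k)})>Kn$. I would split this into two sub-regimes. For $t\geq n\log n$, w.h.p.\ $\delta(G_t)\geq k$, so $G_t^{(k)}=G_t$, a dense Erd\H{o}s--R\'enyi graph in which $\cA_k$ (stronger than $\cA_k'$) follows from classical Hamilton-packing results. For the middle range $Kn<t<n\log n$, I would use a sprinkling argument anchored at $t=Kn$: we already have $G_{Kn}^{(k)}\in\cA_k'$ w.h.p., and whenever a vertex $v$ later joins the core it brings $\geq k$ fresh edges into the current core; those sprinkled edges give enough slack to extend each of the $\lfloor(k-1)/2\rfloor$ Hamilton cycles through $v$ via the standard insertion trick, namely replacing an edge $uw$ of the cycle with a path $uvw$ where $u,w$ are consecutive on the cycle and both adjacent to $v$.

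The main obstacle I expect is executing this absorption simultaneously over all edge-disjoint Hamilton cycles and all $O(n)$ newly admitted vertices without breaking edge-disjointness or disturbing the auxiliary $2$-factor/matching. The natural remedies are either to bake explicit absorbing gadgets into the packing output by Theorem~\ref{thm:ham}, or to use a moment computation showing that with probability $1-o(n^{-1})$ each newly admitted vertex has many adjacent consecutive pairs on each Hamilton cycle, so that the required insertions can be routed to avoid collisions; such a bound must hold uniformly across all $\leq n$ jump times.
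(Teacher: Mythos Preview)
Your reduction to jump times and the treatment of the range where $e(G_t^{(k)})\le Kn$ are essentially what the paper does in its Case~1: it simply union bounds over all $t\le k^{101}n$, using that $G_t^{(k)}$ conditioned on its vertex set and edge count is distributed as $G_{n',m'}^{\delta\ge k}$ and applying Theorem~\ref{thm:ham}. Your handling of $t\ge n\log n$ via classical packing results also works, though the paper instead just invokes edge-monotonicity from the single value $t=n\log n$ (once the whole graph equals its $k$-core).

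The genuine gap is the middle range, and the insertion/absorption scheme you outline does not work. When a vertex $v$ joins the core at some $t\le n\log n$, its degree $d$ into the core is $O(t/n)=O(\log n)$. For the $uw\mapsto uvw$ trick on a fixed Hamilton cycle $H$ you need two of $v$'s neighbours to be \emph{consecutive} on $H$; treating $H$ as fixed and the $d$ neighbours as random, the expected number of such consecutive pairs is $\Theta(d^2/n)=O((\log n)^2/n)=o(1)$, so with high probability \emph{none} exists. This kills the moment-computation remedy: far from succeeding with probability $1-o(n^{-1})$, a single insertion already fails with probability $1-o(1)$. The ``absorbing gadgets'' remedy would require redesigning the output of Theorem~\ref{thm:ham} to carry, for each cycle and for every potential future vertex, a reserved absorption site---this is not a minor add-on but a different proof, and it is unclear how to reserve such sites for vertices whose future neighbourhood into the core you do not yet know.

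The paper's approach to the middle range $k^{101}n\le t\le n\log n$ is entirely different: it reruns the P\'osa-rotation construction from scratch at each $t$. One first reveals $G_{t/k}$ together with all edges of $G_t$ touching $[n]\setminus V_1$, where $V_1=V(G_{t/k}^{(k)})$; the $\Theta(t)$ still-unrevealed edges are then a uniformly random set inside $V_1$, and they play the role of the sprinkled edges $E_R$. A separate expansion lemma for this denser regime (Lemma~\ref{lem:expansion2}) replaces Lemma~\ref{lem:expansion} and drives the rotation argument to a failure probability of $o(n^{-2})$ per $t$, which is enough to union bound over all $O(n\log n)$ values of $t$. In short, the paper does not try to grow an existing packing as the core expands; it rebuilds the packing at every step, with the extra edges in the denser regime providing ample random boosters.
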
 

In particular
Theorem \ref{thm:hamCores} implies that w.h.p. if the 4-core of  $G_{t}$ is non-empty then it is Hamiltonian. 

To construct the Hamilton cycles in $G\sim G^{\delta \geq k}_{n,m}$ we start by applying  Theorem 6.1 from \cite{A} in order to (i) decompose $G$ into $G'\subset G$ and $R=E(G)\setminus E(G')$ where the distribution of $R$ is fairly close to uniform  and (ii) extract from $G'$ a $(k-1)$-matching $M$ of size $(k-1)n/2-o(n)$. Here by ``a $(k-1)$-matching" we refer to a set of edges that spans a graph of maximum degree $k-1$ as opposed to a matching of size $k-1$.

\begin{theorem}[Theorem 6.1 of \cite{A}]\label{thm:kGreedy2}
Let $k\geq 3$,  $k/2< c=O(1)$, $n^{-0.49}\leq p =o(1)$ and $G\sim G_{n,cn}^{\d \geq k}$. Then, with probability $1-o(n^{-9})$, there exists $V_0\subset V(G)$ of size at most $3cnp$ and $E_p\subset E(G)$ of size at least $\frac{(2cn-kn)p}{4}$ such that
\begin{itemize}
    \item[(i)] Given the set $E(G)\setminus E_p$ the edge set $E_p$ is distributed uniformly at random among all sets of size $|E_p| $ that are disjoint from $E(G)\setminus E_p$ and not incident to $V_0$ and
    \item[(ii)] $E(G)\setminus E_p$ spans a $(k-1)$-matching $M$ of size at least $kn/2-n^{0.401}$.
\end{itemize} 
In addition with probability $1-o(n^{-9})$ the sets $V_0,E_p$ and $M$ described above can be generated in $O(n)$ time.
\end{theorem}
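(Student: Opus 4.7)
The plan is to prove Theorem~\ref{thm:kGreedy2} via a two-stage procedure: a \kG-style algorithm produces the $(k-1)$-matching $M$ and identifies the exceptional vertex set $V_0$, while a configuration-model symmetry argument shows that the residual edge set $E_p$ is conditionally near-uniform.

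First, after conditioning on the degree sequence $\mathbf{d}$ of $G\sim G_{n,cn}^{\delta\geq k}$, which by standard results is w.h.p.\ concentrated around a truncated-Poisson-type law of mean $2c$, the graph $G$ is uniform among simple realisations of $\mathbf{d}$ and is well approximated by the pairing model. I would let $V_0$ consist of vertices of exceptionally high degree (above some polylogarithmic threshold), plus a small additional set flagged during the algorithm; tail bounds on the degree distribution together with a first-moment bound on algorithmic failures keep $|V_0|\leq 3cnp$ with probability $1-o(n^{-9})$.

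Second, I would run a greedy algorithm that processes vertices in an arbitrary order and, at each vertex $v\notin V_0$, selects $k-1$ incident edges to add to $M$ subject to the max-degree-$(k-1)$ constraint, flagging $v$ into $V_0$ if this fails. The remaining edges not in $M$ and not incident to $V_0$ form a residual pool, from which $E_p$ is taken. Using the pairing-model approximation, a careful first-moment/tail analysis shows that w.h.p.\ the number of stuck vertices is $O(n^{0.401})$, yielding the claimed size bound on $M$. The bound $|E_p|\geq (2cn-kn)p/4$ follows because each non-$V_0$ vertex $v$ contributes $d_v-(k-1)$ leftover edges, which sum to roughly $(2c-k)n$ edges of which a $p$-fraction lands in $E_p$ (with a factor of $2$ lost to double-counting).

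The main technical obstacle is part~(i): showing that $E_p$ is conditionally uniform given $E(G)\setminus E_p$ among all sets of the correct size disjoint from $E(G)\setminus E_p$ and not incident to $V_0$. I would prove this by a switching argument --- any two candidate realisations of $E_p$ consistent with a fixed complement and avoiding $V_0$ correspond bijectively to pairings of the free half-edges at non-$V_0$ vertices, and hence have equal conditional probability in the pairing model. The delicate point is that the algorithmic choices producing $M$ and $V_0$ must depend only on $E(G)\setminus E_p$; this can be enforced by running the algorithm in two phases, first revealing enough edges to fix $M$ and $V_0$ and only then sampling $E_p$ from the remaining unexposed edges. The linear-time claim follows directly from the structure of the algorithm, since each vertex is examined a bounded number of times and each edge is touched $O(1)$ times.
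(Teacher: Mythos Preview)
The paper does not contain a proof of this statement: Theorem~\ref{thm:kGreedy2} is quoted verbatim as ``Theorem 6.1 of \cite{A}'' and used as a black box. There is therefore no proof in the present paper to compare your proposal against; the argument lives entirely in the companion paper~\cite{A}.

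That said, your sketch has a genuine gap in how the parameter $p$ enters. In the theorem, $p$ is an \emph{input} satisfying $n^{-0.49}\le p=o(1)$, and it governs all three size bounds ($|V_0|\le 3cnp$, $|E_p|\ge (2cn-kn)p/4$, and implicitly the defect $n^{0.401}$ of $M$ via $p\ge n^{-0.49}$). Your proposal treats $p$ as something that emerges from the algorithm (``a $p$-fraction lands in $E_p$'') rather than as a parameter that is fed in, and you never specify the mechanism that ties $V_0$, $E_p$, and $p$ together. The natural reading of the statement is that one first randomly marks a $p$-fraction of some pool of ``excess'' edges (or half-edges) before running the matching algorithm, and $V_0$ is then the set of vertices touched by marked edges; without committing to such a mechanism, the conditional-uniformity claim in part~(i) cannot even be formulated precisely, let alone proved by switching. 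Your two-phase idea (fix $M$ and $V_0$ first, then sample $E_p$) is in the right spirit, but as written the phases are entangled: your $V_0$ depends on algorithmic failures during the construction of $M$, which in turn depends on which edges are available, which depends on $E_p$. You need to decouple these so that the choice of $E_p$ is genuinely made \emph{after} $V_0$ and $E(G)\setminus E_p$ are fixed.
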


Now given $G',M$ and $R$, promised by the above Theorem, we repeatedly apply the Tutte-Berge formula in order to  peel from $M$, $(k-1)$ matchings of size $n/2-o(n)$, say $M_1,M_2,...,$ $M_{k-1}$. We then, iteratively, convert pairs of matchings into Hamilton cycles as follows. We first take the union of 2 matchings and remove an edge from each cycle created to create a \textbf{VDPC} (vertex disjoint path covering) of $V$. A  VDPC is a set of vertex disjoint path that covers $V$. Here single vertices are considered to be paths of length zero. Thereafter we introduce ``fake edges" and glue the paths given by the VDPC into a Hamilton path. Thereafter, using P\'osa rotations along with few edges from $R$ we close this path into a Hamilton cycle. This new Hamilton cycle either is entirely spanned by $G$ or, by removing a fake edge, it defines a Hamiltonian path with fewer ``fake edges". We repeat this process until we get a Hamilton cycle no ``fake edges" hence a Hamilton cycle that is entirely spanned by $G'\cup R = G$. We slightly abuse the notion of a VDPC and call a Hamilton cycle a VDPC of size 0. 

When applying the above process, after we have constructed Hamilton cycles $H_1,H_2,...,H_i$, to turn $\big(M_{2i}\cup M_{2i+1}\big)\setminus \big(\bigcup_{j=1}^i E(H)\big)$ into a Hamilton cycle $H_{i+1}$ we work in $G'\setminus \big(\cup_{j=1}^i E(H)\big)$. Thus potentially, $E(H_{i+1})$ contains edges from the matchings that have not been processed yet i.e. from $M_j$, $j\geq 2i+2$. We will ensure that at each iteration  $o(n)$ such edges may be used hence at the beginning of each iteration we will have a pair of matching, each of size $n/2-o(n)$. 

The rest of the paper is organized as follows. At Section \ref{sec:structural} we introduce the model which we use to analyse $G_{n,m}^{\delta \geq k}$, prove some typical structural properties of $G_{n,m}^{\delta \geq k}$ and then derive a desired decomposition of it. At Section \ref{sec:gnm} we prove Theorem \ref{thm:ham}. Finally we give a sketch of the proof of Theorem \ref{thm:hamCores} at Section \ref{sec:gnp} which is primarily based on the proof of Theorem \ref{thm:hamCores}.

\section{Structural Properties of $G_{n,m}^{\delta \geq k}$}\label{sec:structural}
\subsection{Generating $G_{n,m}^{\delta \geq k}$}\label{model}
To analyse $G_{n,m}^{\delta \geq k}$ we use a variation of Bollob\'{a}s configuration model \cite{BolCM}. Given $n,m \in \mathbb{N}$ and a sequence of size $2m$, $\bx=(x_1,x_2,...,x_{2m})\in [n]^{2m}$ we define the multigraph $G_\bx$ by $V(G_\bx):=[n]$, $E(G_\bx):=\{\{x_{2j-1},x_{2j}\}:j\in [m]\}$. Thus $G_\bx$ is a graph on $n$ vertices with  $m$ edges. The degree of some vertex $v\in [n]$ with respect to the sequence $\bx$ is equal to the number of times it appears in $\bx$, i.e. $d_\bx(v)=|\{i: x_i=v, 1\leq i\leq 2m\}|$. We let $\cS_{n,2m}^{\delta \geq k}$ be the set of  sequences $\bx=(x_1,x_2,...,x_{2m})$ such that $d_{\bx}(i)\geq k$ for $i \in [n]$.
If $\bx$ is chosen uniformly at random from $\cS_{n,2m}^{\d \geq k}$ then  $G_\bx$ is close in distribution to $G_{n,m}^{\d \geq k}$. Indeed, conditioned on $G_\bx$ being simple, the distributions of $G_\bx$ and $G_{n,m}^{\d \geq k}$ are identical. Both are uniform over the simple graphs on $n$ vertices with $m$ edges and minimum degree $k$. Each such graph will correspond to $m!2^m$ sequences in $\cS_{n,2m}^{\d \geq k}$.

For $\lambda>0$ let 
\begin{equation}\label{eq:fk}
    f_k(\lambda)=e^{\lambda}-\sum_{i=0}^{k-1} \frac{\lambda^i}{i!}.
\end{equation} 
In addition, let $\cP_{\geq k}(\l)$ be the {\em truncated at $k$ Poisson($\lambda$}) random variable, i.e.
$$\pr(\cP_{\geq k}=t)=\frac{{\l}^t}{t!f_k({\l})},\hspace{1in}\text{ for }t\geq k.$$

The next Lemma  describes a typical element of $\cS_{n,2m}^{\d \geq k}$. Let $\bx$ be an element of $\cS_{n,2m}^{\d \geq k}$ chosen  uniformly at random. Lemma \ref{lem:equiv} states that the joint distribution of $d_1,d_2,...,d_{n}$ is the same as the joint distribution of $\cP_1,\cP_2,...,\cP_{n}$ conditioned on $\sum_{i=1}^n \cP_i=2m$, where $\cP_i\sim \cP_{\geq k}(\lambda)$ for $i\in[n]$.
%is a $Poisson(\lambda)$ random variable conditioned on being at least $k$ %, for some well tune universal value of $\lambda$. 

\begin{lemma}\label{lem:equiv}
Let $k,n,m \in \mathbb{N}$ be such that $2m\geq kn$ and let $\bx$ an element of  $\cS_{n,2m}^{\d \geq k}$  chosen uniformly at random. Let $\lambda > 0$ and let $\{Z_i:i \in [n]\}$  be a set of independent $\cP_{\geq k}(\l)$ random variables. Then for every $d_1,d_2,...,d_n\geq k$,
$$ \pr\big(d_\bx(i)=d_i \text{ for } i\in [n]\big)= \pr\bigg(Z_i=d_i \text{ for } i\in [n]\bigg| \sum_{i=1}^n Z_i=2m\bigg).$$
\end{lemma}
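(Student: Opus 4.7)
The plan is to compute both sides of the claimed equality explicitly in closed form and observe that they coincide; in particular, the parameter $\lambda$ will disappear from the right-hand side upon conditioning, so the identity will hold for every $\lambda>0$.

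First I would handle the left-hand side. Fix a degree sequence $(d_1,\ldots,d_n)$ with $d_i\geq k$ and $\sum_{i=1}^n d_i = 2m$. The number of sequences $\bx \in [n]^{2m}$ realizing this degree sequence (i.e.\ with $d_\bx(i)=d_i$) is the multinomial coefficient $\binom{2m}{d_1,\ldots,d_n}=\frac{(2m)!}{\prod_i d_i!}$, because such sequences are in bijection with ways to assign the $2m$ positions to the $n$ vertices so that vertex $i$ receives $d_i$ positions. Every such sequence automatically satisfies $d_\bx(i)\geq k$, so it lies in $\cS_{n,2m}^{\delta \geq k}$. Therefore
\begin{equation*}
\pr\bigl(d_\bx(i)=d_i \text{ for } i\in[n]\bigr) \;=\; \frac{(2m)!/\prod_i d_i!}{\sum_{\mathbf{e}}(2m)!/\prod_i e_i!} \;=\; \frac{1/\prod_i d_i!}{\sum_{\mathbf{e}} 1/\prod_i e_i!},
\end{equation*}
where the sum is over all $\mathbf{e}=(e_1,\ldots,e_n)$ with $e_i\geq k$ and $\sum_i e_i=2m$.

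Next I would handle the right-hand side. By independence and the definition of $\cP_{\geq k}(\lambda)$,
\begin{equation*}
\pr\bigl(Z_i=d_i \text{ for } i\in[n]\bigr)\;=\;\prod_{i=1}^n \frac{\lambda^{d_i}}{d_i!\,f_k(\lambda)} \;=\;\frac{\lambda^{2m}}{f_k(\lambda)^n}\cdot\frac{1}{\prod_i d_i!},
\end{equation*}
and summing the same expression over all admissible $\mathbf{e}$ gives
\begin{equation*}
\pr\Bigl(\textstyle\sum_{i=1}^n Z_i=2m\Bigr)\;=\;\frac{\lambda^{2m}}{f_k(\lambda)^n}\sum_{\mathbf{e}}\frac{1}{\prod_i e_i!}.
\end{equation*}
Taking the ratio, the factor $\lambda^{2m}/f_k(\lambda)^n$ cancels and one obtains exactly $\bigl(1/\prod_i d_i!\bigr)\big/\bigl(\sum_{\mathbf{e}} 1/\prod_i e_i!\bigr)$, which matches the left-hand side.

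There is no real obstacle here: the lemma is a clean instance of the standard principle that conditioning a product of i.i.d.\ (truncated) Poisson random variables on their sum yields a multinomial-type distribution independent of the intensity parameter. The only minor point to check is that the set of admissible degree sequences appearing on both sides is the same, namely all $(e_1,\ldots,e_n)$ with $e_i\geq k$ and $\sum e_i=2m$, which is immediate from the definitions of $\cS_{n,2m}^{\delta\geq k}$ and of $\cP_{\geq k}(\lambda)$.
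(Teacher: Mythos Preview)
Your proposal is correct and follows essentially the same approach as the paper: both compute the multinomial count $(2m)!/\prod_i d_i!$ for the left-hand side, write the joint truncated-Poisson probability as $\lambda^{2m}/f_k(\lambda)^n$ times $\prod_i 1/d_i!$, and observe that the $\lambda$-dependent factor cancels upon conditioning on the sum.
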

\begin{proof}
For $\bx\in \cS_{n,2m}^{\d \geq k}$ let $\cD(\bx)$ be the degree sequence of $\bx$. Define
$\cD=\set{\cD(x):x\in \cS_{n,2m}^{\d \geq k}}$. For a fixed degree sequence $\bd\in \cD$ there exists $(2m)!/\prod_{i\in[n]} \bd_i$ many elements in $\cS_{n,2m}^{\d \geq k}$ with that degree sequence. Thus, for $\bd \in \cD$,
\begin{align*} 
\Pr(\cD(\bx)=\bd)&=  \bfrac{(2m)!}{\underset{i\in [n]}{\prod}\bd_i!} 
\bigg/
\left( \sum_{\bx \in \cS_{n,2m}^{\d \geq k} } 1\right)
=\left( \frac{(2m)!}{\underset{i\in [n]}{\prod}\bd_i!}\right)
\bigg/
\left( \sum_{\bd' \in \cD }\frac{(2m)!}{\underset{v\in [n]}{\prod}\bd_i'!} \right).
\end{align*}
On the other hand, 
\begin{align*}
\Pr \bigg( (Z_1,Z_2,...,Z_n)=\bd \bigg|\sum_{i=1}^n  Z_i = 2m \bigg)
&=  \left( \underset{i\in [n]}{\prod}\frac{ 
e^{-\lambda}\lambda^{\bd_i} }{\bd_i! f_{k}(\lambda)}  \right) \bigg/ \left( \sum_{\bd' \in \cD} \underset{i\in [n]}{\prod}
\frac{ e^{-\lambda}\lambda^{\bd_i'}}{\bd_i'! f_{k}(\lambda)  }\right)
\\& = \left( \lambda^{2m} \underset{i\in [n]}{\prod}
\frac{1}{\bd_i!}  \right)  \bigg/ \left( \lambda^{2m} \sum_{\bd' \in \cD} \underset{i\in [n]}{\prod}
\frac{1}{\bd_i'!} \right) 
\\&=\Pr(\cD(\bx)=\bd).
\end{align*}
\end{proof}

It can be shown, see for example \cite{McK} that for a random $x\in \cS_{n,2m}^{\d\geq k}$ if
$m=O(n)$ then,
\begin{equation*}
%\end{equation*}\label{simpx}
\Pr(G_\bx\text{ is simple})=\Omega(1).
\end{equation*}
Hence, choosing a random element of $\bx\in \cS_{n,2m}^{\d\geq k}$ and then generating $G_\bx$ is a good model for generating
$G_{n,m}^{\d \geq k}$ and for any function $f(\cdot)$ such that $f(n)\to 0$ as $n\to \infty$ any properties that hold with probability $1-o(f(n))$ for $G_\bx$ also hold with probability $1-o(f(n))$ for $G_{n,m}^{\d \geq k}$.

\subsection{Expansion Properties of $G_{n,m}^{\d\geq k}$}

Let $k/2<c=O(1)$ and $m=cn =O(n)$. Let $\lambda$ be the unique positive real number that satisfies 
\begin{equation}\label{eq:lambda}
 \frac{{\l}f_{k-1}({\l})}{f_k({\l})}=2m.
\end{equation}

Let $\cE$ be an occupancy event in $G_{n,m}^{\d\geq k}$. Denote by $G_{n,m}^{\d\geq k,seq}$ the random graph that is generated from the random sequence model (i.e. from choosing a random element of $\cS_{n,2m}^{\d\geq k}$ and then generating the corresponding graph) and $G_{n,m}^{\d\geq k,Po(\l)}$ the random graph that is generated by first generating $n$ independent, $ \cP_{k}(\l)$ random variables $P_1,P_2,...,P_n$, then choosing a random sequence in $[n]^{\sum_{i\in [n]} P_i}$ with degree sequence $P_1,P_2,...,P_n$ and finally generating he corresponding graph if $\sum_{i\in [n]} P_i$ is even. Then,
\begin{align}\label{eq:models}
    \Pr\big(G_{n,m}^{\d\geq k} \in \cE\big) &\leq O(1) \Pr\big(G_{n,m}^{\d\geq k,seq} \in \cE\big)= O(1) \Pr\bigg(G_{n,m}^{\d\geq k,Po(\l)} \in \cE\bigg| \sum_{i\in [n]}{P_i}=2m\bigg) \nonumber
 \\&\leq O(n^{0.5}) \Pr\big(G_{n,m}^{\d\geq k,Po(\l)} \in \cE\big),
\end{align} 
where the last inequality  in \eqref{eq:models} follows by the choice of $\lambda$.

We summarize the expansion properties of $ G_{n,m}^{\delta \geq k}$ at the next lemma. Its proof is given in Appendix \ref{app:lemmas}.

\begin{lemma}\label{lem:expansion}
Let $G\sim G_{n,m}^{\delta \geq k}$, where $m=cn$ with $k/2<c=O(1)$ and let $\lambda$ be given by \eqref{eq:lambda}. Let $\beta_1,\gamma_1\in (0,0.1)$ be such that $\bfrac{9e^{1+\lambda} \lambda^2}{cf_k(\lambda)} \bfrac{\gamma_1 \lambda}{c}^{0.1}<\frac{1}{2}$ and $[2(k+\lambda) + \log_2(\beta_1\gamma_1)+ 3]\beta_1  < 2(1-\beta_1)$. Then with probability $1-o(n^{-1})$,
\begin{itemize}
\item[(i)] every set $S\subset V(G)$ of size $ |S| \leq \gamma_1n$  spans less than $1.1|S|+1$ edges,
\item[(ii)] every set $S\subset V(S)$ of size  $|S| \leq \beta_1 \gamma_1 n$  is incident to less than $2(1-\beta_1)\gamma_1 n$ edges,
\item[(iii)] $G$ does not span a set of $\frac{2n}{(\log\log n)^6}$ vertex disjoint cycles.
\end{itemize}
\end{lemma}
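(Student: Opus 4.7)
The plan is to work entirely in the Poisson configuration model $G_{n,m}^{\delta\geq k,Po(\lambda)}$, since by \eqref{eq:models} it suffices to prove each of (i)--(iii) holds there with probability $1-o(n^{-1.5})$. In this model each vertex $v\in[n]$ independently draws $d_v\sim\cP_{\geq k}(\lambda)$ half-edges and these are paired uniformly at random, so conditional on the degrees the probability that any specific pair of half-edges is matched is $1/(2m-1)$.

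For (i), I would apply the first moment: for a fixed $s$-set $S$ and any $j$ specific pairs in $\binom{S}{2}$, the probability that all $j$ appear as edges of $G$ is at most $(C/n)^j$ for a constant $C=C(\lambda,k)$ (coming from the Poisson-configuration edge probability $E[d_u d_v]/(2m)$). Taking $j=\lceil 1.1s\rceil+1$, the cases $s\leq 3$ are vacuous since $\binom{s}{2}<1.1s+1$, while for $s\geq 4$
\[
\binom{n}{s}\binom{\binom{s}{2}}{j}(C/n)^j \;=\; O(n^{-2})\cdot\bigl(e^{O(1)}C^{1.1}(s/n)^{0.1}\bigr)^{s-4}.
\]
For constant $s$ this is $O(n^{-2})$, and for $s\in[\omega(1),\gamma_1 n]$ the second factor forms a geometric series in $s$ whose common ratio the hypothesis $(9e^{1+\lambda}\lambda^2/(cf_k(\lambda)))(\gamma_1\lambda/c)^{0.1}<1/2$ makes at most $1/2$. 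Summing over $s$ gives a total of $O(n^{-2})=o(n^{-1.5})$.

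For (ii), I would bound the number of edges incident to $S$ by $d(S):=\sum_{v\in S}d_v$, a sum of $|S|\leq \beta_1\gamma_1 n$ i.i.d.\ truncated Poisson random variables with MGF $M(t)=f_k(\lambda e^t)/f_k(\lambda)$. Chernoff gives
\[
\Pr\bigl(d(S)\geq 2(1-\beta_1)\gamma_1 n\bigr)\leq \exp\bigl(-2t(1-\beta_1)\gamma_1 n\bigr)M(t)^{|S|}.
\]
Combining with $\binom{n}{|S|}\leq (e/(\beta_1\gamma_1))^{|S|}$ and $\log M(t)\lesssim k+\lambda$ at the optimal $t$, the union-bound log-exponent becomes at most
\[
\bigl([2(k+\lambda)+\log_2(\beta_1\gamma_1)+3]\beta_1-2(1-\beta_1)\bigr)\gamma_1 n,
\]
which the hypothesis on $\beta_1$ makes $-\Omega(n)$, so summing over $s$ yields $2^{-\Omega(n)}=o(n^{-1.5})$.

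For (iii), set $K:=2n/(\log\log n)^6$ and let $N$ count unordered $K$-tuples of pairwise vertex-disjoint cycles in $G$. The standard Poisson-configuration estimate says that for cycle lengths $\ell_1,\ldots,\ell_K\geq 3$ with $\sum\ell_i\leq n$, the expected number of \emph{ordered} such tuples (each cycle given a start and direction) is $\leq \lambda^{\sum_i\ell_i}$, so
\[
\E[\text{unordered collections with these lengths}]\leq \frac{\lambda^{\sum_i\ell_i}}{K!\prod_i 2\ell_i}.
\]
Summing via $\prod 2\ell_i\geq 6^K$ and the composition count $\binom{t-2K-1}{K-1}\leq (et/K)^K$ gives
\[
\E[N]\leq \frac{n\lambda^n(en/K)^K}{K!\,6^K}.
\]
By Stirling $\log K!\asymp K\log K=\Theta(n\log n/(\log\log n)^6)$, which exceeds $n\log\lambda+K\log(en/(6K))=O(n)+O(n\log\log\log n/(\log\log n)^6)$ by a growing factor of $\log n/(\log\log n)^6$. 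Hence $\E[N]=\exp(-\Omega(n\log n/(\log\log n)^6))=o(n^{-C})$ for every constant $C$, and Markov finishes it. The main obstacle is (iii): one must exploit the super-polynomial decay from $K!$ to dominate the exponential-in-$n$ sum over cycle-length profiles; parts (i) and (ii) reduce to first-moment/Chernoff bookkeeping in the Poisson model in which the specific hypotheses on $\gamma_1$ and $\beta_1$ are exactly tuned to match the resulting error exponents.
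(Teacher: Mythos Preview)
Your plan for (i) is the same first-moment computation the paper carries out in the Poisson configuration model, and your sketch for (ii) is close in spirit: the paper, rather than applying Chernoff directly to $\sum_{v\in S}d_v$, first shows that with probability $1-o(n^{-1})$ at most $2^{3-i}n$ vertices have degree $\geq 2(k+\lambda)+i$ for every $i\geq 1$, and then bounds the sum of the top $\beta_1\gamma_1 n$ degrees by summing these layers. That layered count is what produces the base-$2$ logarithm in the hypothesis; your Chernoff exponent will give a condition of the same shape but with natural logarithms and a different additive constant, so if you want to match the hypothesis exactly as stated you would have to revisit this step.

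Your argument for (iii) is correct but takes a genuinely different route from the paper. The paper does no first-moment calculation on vertex-disjoint cycle families at all. Instead it deduces (iii) \emph{deterministically} from (i): call a cycle \emph{small} if its length is at most $(\log\log n)^6$. Property (i) forbids any small cycle $C$ from having three other small cycles within graph-distance $\log_{1.1}\log n$, since that would yield a set $S$ with $|S|\leq O((\log\log n)^6)$ spanning $|S|+2$ edges. Using $\delta(G)\geq 3$ together with (i) again, the $(\log_{1.1}\log n)$-neighbourhood of each small cycle has size at least $\log n$, so there are at most $3n/\log n$ small cycles; large cycles trivially number at most $n/(\log\log n)^6$, and the two together give the bound. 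The paper's approach is shorter and yields a purely structural statement (any minimum-degree-$3$ graph satisfying (i) has few vertex-disjoint cycles), while yours is self-contained and does not need (i) as input. One small correction to your sketch: the bound ``expected number of ordered marked tuples $\leq \lambda^{\sum\ell_i}$'' should have $\lambda$ replaced by the constant $n\,\E[d(d-1)]/(2m-2n)=\lambda^2 f_{k-2}(\lambda)/\bigl(2(c-1)f_k(\lambda)\bigr)$; since you only need \emph{some} constant base versus the $K!\sim K^{K}$ in the denominator, this does not affect your conclusion.
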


\subsection{Decomposing $G_{n,m}^{\d\geq k}$}
To pill off matchings of size $(1+o(1))n/2$ from the large $k$-matching promised by Theorem \ref{thm:kGreedy2} we use the following Lemma.
\begin{lemma}\label{lem:pillmatch}	
Let $r\geq 0$ and $\ell\in\mathbb{N}^+$ . Let $G=(V=[n],E)$ be a graph of maximum degree $\ell$ with $|E|\geq \ell n/2- \frac{rn}{(\log\log n)^6}$.
If $G$  does not span a set of $\frac{2n}{(\log\log n)^6}$ vertex disjoint cycles then it spans a matching of size at least $n/2-\frac{(r+2)n}{2(\log\log n)^6}$.
\end{lemma}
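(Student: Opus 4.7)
My plan is to argue by contradiction via the Gallai--Edmonds structure theorem. I would assume that a maximum matching $M^*$ of $G$ has size $n/2-s$ with $s>\frac{(r+2)n}{2(\log\log n)^6}$ and apply Gallai--Edmonds to decompose $V(G)=D\cup A\cup C$ so that every component of $G[D]$ is factor-critical (hence of odd order), $G[C]$ admits a perfect matching, no edge joins $C$ to $D$, and $|M^*|=\tfrac{1}{2}\bigl(n-c(D)+|A|\bigr)$, where $c(D)$ denotes the number of components of $G[D]$. Rearranging gives $c(D)-|A|=2s$, and I split $c(D)=c_1+c_{\geq 3}$ into the numbers of singleton components and of components on at least three vertices.

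The key structural step is to extract vertex-disjoint cycles. Any factor-critical graph of order at least three contains an odd cycle, namely the base of its odd-ear decomposition, so each of the $c_{\geq 3}$ non-singleton components of $G[D]$ contains a cycle, and because these components are themselves pairwise vertex-disjoint the cycles give $c_{\geq 3}$ vertex-disjoint cycles in $G$. The hypothesis of the lemma therefore forces $c_{\geq 3}<\frac{2n}{(\log\log n)^6}$.

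A short degree count then closes the argument. The edge bound yields the global degree-deficit $\sum_v(\ell-d_G(v))=\ell n-2|E(G)|\leq\frac{2rn}{(\log\log n)^6}$. Every singleton of $G[D]$ has all of its neighbours in $A$, so the sum of degrees of the singletons is at most the number of edges incident to $A$, and hence at most $\ell|A|$; the singletons alone therefore contribute at least $\ell c_1-\ell|A|=\ell(2s-c_{\geq 3})$ to the global deficit. When $2s\geq c_{\geq 3}$ this gives $\ell(2s-c_{\geq 3})\leq\frac{2rn}{(\log\log n)^6}$, and combined with $c_{\geq 3}<\frac{2n}{(\log\log n)^6}$ and $\ell\geq 2$ it yields $s<\frac{(r+2)n}{2(\log\log n)^6}$, contradicting the assumption; the subcase $2s<c_{\geq 3}$ is immediate, as it forces $s<\frac{n}{(\log\log n)^6}\leq\frac{(r+2)n}{2(\log\log n)^6}$ since $r\geq 0$.

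The main obstacle is conceptual rather than computational. Raw Tutte--Berge produces only odd components of $G-U$ for some $U$, and odd components need not contain cycles (for instance $P_3$), so the vertex-disjoint-cycle hypothesis cannot be used directly. The extra strength of Gallai--Edmonds, namely that in the canonical barrier $U=A$ each component of $G[D]$ is factor-critical, is precisely what lets every non-singleton odd component donate a cycle. Once factor-criticality is in hand the remaining numerical bound is tight at $\ell=2$, matching the constant appearing in the lemma.
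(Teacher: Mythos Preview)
Your proof is correct. The approach, however, differs from the paper's in an instructive way. The paper does \emph{not} invoke Gallai--Edmonds; it works directly with the Tutte--Berge formula, but instead of an arbitrary minimizer it takes $S^*$ of \emph{maximum} size among all sets minimizing $n+|S|-o(G-S)$, and then argues by an elementary leaf-stripping argument that no odd component of $G-S^*$ can be a tree on three or more vertices (moving the neighbours of the leaves into $S^*$ would either improve the minimum or enlarge $S^*$). Thus every odd component is either a singleton or contains a cycle, and the remainder of the counting is identical to yours. Your route trades this ad hoc maximality argument for the structural hammer of Gallai--Edmonds, which hands you factor-criticality (and hence a cycle in every non-singleton component) for free; the paper's route is more elementary but requires the extra combinatorial step. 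Your closing remark that ``raw Tutte--Berge cannot be used directly'' is therefore accurate but slightly undersells what can be done: one does not need full Gallai--Edmonds, only a judicious choice of the Tutte--Berge witness. Finally, your explicit invocation of $\ell\geq 2$ in the last numerical step is appropriate---the bound $\frac{r}{\ell}+1\le\frac{r+2}{2}$ genuinely needs it---and this is harmless since every application of the lemma in the paper has $\ell\ge 2$.
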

\begin{proof}
The Tutte-Berge formula states that the maximum matching of $G$, denoted by $\alpha'(G)$, is given by
\begin{equation}\label{eq:T-B}
2\alpha'(G)=\min_{S\subset V(G)}\{n+|S|-o(G-S)\},
\end{equation}
where by  $o(G-S)$ we denote the number of odd components in $G-S$.
Let $S^*$ be a set of maximum size for which $n+|S|-o(G-S)$ is minimized. Observe that every odd component in $G-S^*$ that is a tree has size 1 i.e. it is an isolated vertex of $G-S$. Indeed, if an odd component $C_i$ of $G-S$ is a tree of size larger than 1 (hence of size at least 3) then by letting $L$ to be the set of leaves of $C_i$, $R$ their neighbors in $C_i$ and $S'=S^*\cup R$ we have the following:
If $|L|=|R|+i$ for $i=0,1$ then,  
$$n+|S'|-o(G-S')= n+|S^*|+|R|-o(G-S^*)-|L|+i= n+|S^*|-o(G-S^*),$$ contradicting the maximality of $S^*$. Otherwise, $$n+|S'|-o(G-S')< n+|S^*|+|R|-o(G-S^*)-|R|,$$ contradicting that $S^*$ has been chosen to minimize $n+|S|-o(G-S)$.

Now let $n_{s}$  be the number of isolated vertices in $G-S$ and $n_l$ be the number of the  odd components in $G-S$ that span a cycle. Then, as no odd component of $G\setminus S$ is a tree on more than 2 vertices, we have that $o(G\setminus S)=n_s+n_l$ and $n_l$ is bounded by the maximum number of vertex disjoint cycles in $G$ which is by assumption at most $\frac{2n}{(\log\log n)^6}$.
Thereafter, as $G$ has maximum degree $\ell$ and spans at least  $\ell n/2- \frac{rn}{(\log\log n)^6}$ edges, by considering the edges between $S$ and the isolated vertices in $G-S$ we have,
$$ \ell n_s - \frac{rn}{(\log\log n)^6} \leq \ell |S|.$$
Thus, 
$$2\alpha'(G)= n +|S| - n_s-n_l\geq n- \frac{rn}{\ell(\log\log n)^6}-\frac{2n}{(\log\log n)^6}  \geq n-\frac{(r+2)n}{(\log\log n)^6}.$$
\end{proof}

\begin{theorem}\label{lem:kGreedy2}
Let $k\geq 3$,  $k< c=O(1)$ and $G\sim G_{n,cn}^{\d \geq k}$. Then, with probability $1-o(n^{-2})$, there exist $E_R\subset E(G)$ of size $\frac{(k-1)n}{\log\log n}$ and $E'\subset \binom{[n]}{2}$ of size $o(n^2)$ satisfy the following: 
\begin{itemize}
    \item[(i)] $E_R$ is distributed uniformly over all sets of size $\frac{(k-1)n}{\log\log n}$ that are subset of\\ $\binom{[n]}{2}\setminus \big(E'\cup E(G)\setminus E_R \big)$
\item[(ii)] $E(G)\setminus E_R$ spans a set of $k-1$ edge disjoint matchings $M_1,M_2,...,M_{k-1}$ each having size at least $\frac{n}{2}-\frac{2^kn}{(\log\log n)^6}$.
\end{itemize} 
\end{theorem}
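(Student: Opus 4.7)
The plan is to invoke Theorem \ref{thm:kGreedy2} for both the ``random'' edge set $E_R$ and the large $(k-1)$-matching, and then to iteratively peel off the matchings $M_1,\ldots,M_{k-1}$ from that matching using Lemma \ref{lem:pillmatch}.

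First, I would apply Theorem \ref{thm:kGreedy2} with $p = \frac{4(k-1)}{(2c-k)\log\log n}$, which satisfies $n^{-0.49}\leq p=o(1)$ and is tuned so that the guaranteed lower bound $\frac{(2c-k)np}{4}$ equals $\frac{(k-1)n}{\log\log n}$. With probability $1-o(n^{-9})$ this produces $V_0$ with $|V_0|\leq 3cnp = O(n/\log\log n)$, an edge set $E_p\subseteq E(G)$ with $|E_p|\geq \frac{(k-1)n}{\log\log n}$, and a $(k-1)$-matching $M\subseteq E(G)\setminus E_p$ of size at least $(k-1)n/2 - n^{0.401}$. Let $E_R$ be a uniformly random subset of $E_p$ of size exactly $\frac{(k-1)n}{\log\log n}$, and let $E'$ denote the set of edges of $\binom{[n]}{2}$ incident to $V_0$, so $|E'|\leq n|V_0| = o(n^2)$. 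A standard two-stage uniform sampling argument (uniform sample within a uniform sample is uniform on the joint constraint) then gives, conditional on $E(G)\setminus E_R = (E(G)\setminus E_p)\cup(E_p\setminus E_R)$, that $E_R$ is uniform over all $\frac{(k-1)n}{\log\log n}$-subsets of $\binom{[n]}{2}\setminus(E'\cup(E(G)\setminus E_R))$. This proves~(i).

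For (ii), I additionally condition on the event of Lemma \ref{lem:expansion}(iii), so that neither $G$ nor any of its subgraphs contains $\frac{2n}{(\log\log n)^6}$ vertex-disjoint cycles. The matchings $M_1,\ldots,M_{k-1}$ are extracted from $M$ inductively. At step $i$, set $M^{(i)}_{\mathrm{avail}}:=M\setminus\bigcup_{j<i}M_j$, let $U_j$ be the vertices not covered by $M_j$, and let $U:=\bigcup_{j<i}U_j$. Every $v\notin U$ has been covered by all earlier matchings, so its degree in $M^{(i)}_{\mathrm{avail}}$ is at most $(k-1)-(i-1)=k-i$. For each $v\in U$, deleting one edge per matching that missed $v$ reduces the maximum degree of the resulting subgraph $\hat M^{(i)}$ to $\leq k-i$ while discarding at most $\sum_{j<i}|U_j|$ edges. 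Combined with the identity $\sum_{j<i}|M_j| + \tfrac12\sum_{j<i}|U_j| = (i-1)n/2$ this gives $|E(\hat M^{(i)})|\geq (k-i)n/2 - n^{0.401} - \tfrac12\sum_{j<i}|U_j|$. Applying Lemma \ref{lem:pillmatch} with $\ell=k-i$ and $r_i$ chosen so that $r_i n/(\log\log n)^6$ exceeds this deficit yields $|M_i|\geq n/2-(r_i+2)n/(2(\log\log n)^6)$, and substituting $|U_j|\leq (r_j+2)n/(\log\log n)^6$ gives the recursion $r_i \leq \tfrac12\sum_{j<i}r_j + i + o(1)$.

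The main obstacle is controlling this recursion: each new matching leaves $O(n/(\log\log n)^6)$ uncovered vertices whose residual degree temporarily exceeds the next step's target, so the accumulated deficit inflates multiplicatively. Setting $S_i:=\sum_{j\leq i}r_j$ turns the recursion into $S_i\leq \tfrac32 S_{i-1} + i$ with $S_1 = o(1)$, yielding $S_i = O((3/2)^i)$ and hence $r_i \leq 2^{k+1}$ for all $1\leq i\leq k-1$. This gives $|M_i|\geq n/2 - 2^kn/(\log\log n)^6$ as required, and explains the exponential-in-$k$ factor appearing in~(ii).
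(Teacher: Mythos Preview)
Your approach is the paper's: invoke Theorem~\ref{thm:kGreedy2}, set $E'$ to be the pairs incident to $V_0$, take $E_R$ as a uniformly random subset of $E_p$ of the target size, and then peel off the $M_i$ one by one via Lemma~\ref{lem:pillmatch}, absorbing degree overflow by deleting one extra edge at each vertex left unsaturated. The paper's bookkeeping is slightly cleaner---it maintains a single graph $H_i$ and gets the explicit doubling $r_i=2^i$ rather than your recursion $S_i\le\tfrac32 S_{i-1}+i$---but both land comfortably below $2^k$.

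One caveat worth flagging: the paper's proof treats Theorem~\ref{thm:kGreedy2} as producing a $k$-matching of size $kn/2-o(n^{0.41})$ (the ``$(k-1)$-matching'' in the statement is evidently a typo, since a $(k-1)$-matching cannot have $kn/2$ edges). Consequently the paper's $H_i$ has maximum degree $k-i+1$, and the last invocation of Lemma~\ref{lem:pillmatch} is at $\ell=2$. Under your reading, the final step has $\ell=k-i=1$, and there Lemma~\ref{lem:pillmatch} as stated is actually false once $r>2$: a matching of size exactly $n/2-rn/(\log\log n)^6$ satisfies the hypotheses but not the conclusion. This does not damage your argument, because at $\ell=1$ the graph $\hat M^{(k-1)}$ is itself a matching of size at least $n/2-r_{k-1}n/(\log\log n)^6$ and your recursion already gives $r_{k-1}\le 2^k$; but you should state this directly rather than appeal to the lemma at that step.
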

\begin{proof}
We first apply Theorem \ref{thm:kGreedy2} with $p=\frac{n}{\log\log \log n}$. We let $E_R$ be a random subset of $E_p$ of size $\frac{(k-1)n}{\log\log n}$ and $E'$ be the set of edges incident to the set $V_0$ (of size $O(np)$) given by Theorem \ref{thm:kGreedy2}. Then Theorem \ref{thm:kGreedy2} implies that with probability $1-o(n^{-2})$, $G$, $E_R$, $E'$ satisfy Condition (i) while  $G\setminus (E'\cup R)$ spans a $k$-matching $M$ of size $kn/2-o(n^{0.41})$. Let $H$ be the graph spanned by $M$.

We let $H_1=H$. For $i\in [k-1]$, having defined a subgraph $H_i$ of $H$ of maximum degree $k-i+1$ and of size at least $\frac{(k-i+1)n}{2}-\frac{2^{i}n}{(\log\log n)^6}$ we let $M_i\subset E(H_i)$ be a maximum matching of $H_i$.
Lemma \ref{lem:expansion} implies that $H_i$ satisfies the conditions of Lemma \ref{lem:pillmatch} with $r=2^i$ and $\ell=k-i+1$. Thus, Lemma \ref{lem:pillmatch} implies that $|M_i|\geq n/2-\frac{2^{i}n}{(\log\log n)^6}$.

Now to construct $H_{i+1}$, we first remove form $H_i$ the matching $M_i$,  and then we remove an edge incident to every vertex of $H_i$  that is not saturated by $M_i$. Hence $H_{i+1}$ has maximum degree $k-i$ and spans at least $E(H_i)-|M_i|-(n-2|M_i|) \geq \frac{(k-i)n}{2}-\frac{2^{i+1}n}{(\log\log n)^6}$ many edges. 
\end{proof}

\section{Packing Hamilton Cycles in $G_{n,m}^{\d\geq k}$}\label{sec:gnm} 
In this section we prove Theorem \ref{thm:ham}. The main tool that we are going to use are P\'osa rotations. P\'osa rotations is a procedure that starts with a path and aims to either find a path of larger length or many paths of the same  length. Given a path $P=(x_1,x_2,\ldots,x_s)$ and an edge $\set{x_s,x_i}$ with $1<i<s-1$, the path $(x_1,\ldots,x_i,x_s,x_{s-1},\ldots,x_{i+1})$ is said to be obtained from $P$ by a P\'{o}sa rotation that fixes the end-vertex $x_1$. In such a case we call the vertex $x_i$ the pivot vertex, $x_ix_{i+1}$ the deleted edge and $x_ix_s$ the inserted edge.

We will repeatedly apply the following lemma to subgraphs of $G^{\delta \geq k}_{n,m}$ in order to construct the Hamilton cycles one by one.

\begin{lemma}\label{lem:builtingmatch}
Let $G=(V=[n],E)$ be a graph,  $E',E_R\subset \binom{[n]}{2}\setminus E$,  and $\beta,\gamma,\epsilon \in (0,1)$ be such that 
\begin{itemize}
    \item[(i)] $G$ has minimum degree $4$,
    \item[(ii)] $E$ spans a 2-matching $M$ of size $n-\frac{n}{(\log\log n)^{6-\epsilon}}$, 
    \item[(iii)] $G$ does not spans a set of $\frac{n}{(\log\log n)^6}$ pairwise vertex disjoint cycles,
\item[(iv)] $|E'|=o(n^2)$ and
 $|E_R| = \frac{n}{\log\log n }$,
\item[(v)]  $E_R$ is distributed uniformly over the subsets of $\binom{[n]}{2}\setminus (E\cup E')$ of size $|E_R|$,
\item[(vi)] every set $S\subset V$ of size less than $\gamma n$ spans less than $1.1|S|+1$ edges,
\item[(vii)] every set $S\subset V$ of size less than $ \beta \gamma n$ is incident to less than $2(1-\beta)\gamma n$ edges.
\end{itemize}
Then, with probability $1-o(n^{-1})$, $G \cup E_R$ spans a Hamilton cycle $H$ that intersects $M$ in at least $n- \frac{n}{(\log \log n)^{6-\epsilon-1/10k }}$ edges. 
\end{lemma}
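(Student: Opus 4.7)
The strategy is to iteratively upgrade the 2-matching $M$ into a Hamilton cycle of $G\cup E_R$: maintain a Hamilton path of $V$ together with a steadily shrinking set of ``fake'' edges (edges not in $G\cup E_R$), replacing fake edges via P\'osa rotations and random edges of $E_R$, while losing only a small number of edges of $M$ in the process. Set $\mu:=n/(\log\log n)^{6-\ep}$; the total $M$-loss allowed by the statement is $\Theta(\mu((\log\log n)^{1/(10k)}-1))$.

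\emph{Step 1 (initial path cover).} Since $M$ has maximum degree $2$ and $|M|\geq n-\mu$, a degree count bounds the number of vertices of $M$-degree less than $2$ by $2\mu$, so $M$ decomposes into at most $\mu$ path-components of positive length, at most $\mu$ isolated vertices, and some cycle-components. Condition (iii) limits the cycle-components to $\leq 2n/(\log\log n)^6 \leq \mu$. Deleting one edge from each cycle-component yields a vertex-disjoint path cover $\cP$ of $V$ with at most $N\leq 3\mu$ components, at a cost of $\leq \mu$ $M$-edges deleted. Concatenating the paths of $\cP$ in an arbitrary order with $N-1$ ``fake'' edges $F$ yields an initial Hamilton path $H_0$ of the complete graph containing all surviving edges of $M$.

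\emph{Step 2 (fake-edge reduction).} Starting with $H=H_0$, iterate the following: fix one endpoint $u$ of $H$ and apply P\'osa rotations from the other end, using only edges of $G\cup E_R$, to produce an endset $S_v$ of reachable second endpoints with $|S_v|\geq \gamma n$; symmetrically rotate from $u$ to obtain $S_u$ of size $\geq \gamma n$. The expansion hypotheses (vi) and (vii) are exactly the classical P\'osa conditions needed to build such endsets. By hypothesis (v), $E_R$ is uniform in $\binom{[n]}{2}\setminus(E\cup E')$ with $|E_R|=n/\log\log n$, so a deferred-exposure argument shows that with probability $1-o(n^{-2})$ there is a pair $\{u',v'\}\in E_R\cap(S_u\times S_v)$ not previously used. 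Realize the rotations that place $u'$ and $v'$ at the two endpoints of $H$ and close with $\{u',v'\}$, obtaining a Hamilton cycle $H'$ of $G\cup E_R$. If $H'$ contains a fake edge of $F$, remove it to obtain a Hamilton path $H$ with strictly fewer fake edges, and iterate; otherwise output $H'$. The loop terminates after at most $|F|\leq 3\mu$ rounds, and after a union bound the overall success probability is $1-o(n^{-1})$.

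\emph{Main obstacle.} Each rotation deletes one edge from the current path and that edge may belong to $M$. A naive $O(\log n)$ bound on rotation depth per iteration would give a total $M$-loss of order $\mu\log n$, which exceeds the allowed budget $\mu(\log\log n)^{1/(10k)}$. The delicate step is therefore to show that the rotations required to reach the successful closing pair $(u',v')$ can be arranged to have depth only $O((\log\log n)^{1/(10k)})$. The key idea is that the closing pair is chosen among the $\Omega(n^2)$ candidates in $S_u\times S_v$, while $E_R$ is so much denser than needed ($|E_R|\gg N$) that a small-fraction search suffices: one exposes $E_R$ in stages and demands a closing pair lying on a short branch of the rotation tree and, moreover, arranges the rotations along that branch to preferentially delete fake or $E_R$ edges rather than $M$-edges by choosing which neighbor to rotate against. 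Formalizing this ``short-branch'' closure inside the P\'osa-rotation framework, while tracking the deferred randomness in $E_R$ across all $O(\mu)$ iterations, is the main technical step; once it is in place, the per-iteration $M$-loss is $O((\log\log n)^{1/(10k)})$, summing to $O(\mu(\log\log n)^{1/(10k)})$ in total, within budget.
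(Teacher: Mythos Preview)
Your outline correctly identifies both the overall architecture (VDPC $\to$ Hamilton path with fake edges $\to$ rotate--close--reopen) and the main obstacle (controlling the $M$-loss against the rotation depth). The gap is in your proposed fix for that obstacle.

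Your ``short-branch closure'' cannot work from a \emph{single} Hamilton path $H_0$. Starting from one path, the endset after $d$ rotations has size at most $1.1^{O(d)}$ under hypothesis (vi); at depth $d=O((\log\log n)^{1/(10k)})$ this is subpolynomial, and the probability that a uniform edge of $E_R$ lands in $S_u\times S_v$ is then $o(1)$, not $\Omega(1)$. Likewise the suggestion to ``preferentially delete fake or $E_R$ edges'' does not make sense for P\'osa rotations: once you fix the inserted edge, the deleted edge is forced (it is the path-edge adjacent to the pivot), so there is no freedom to steer deletions away from $M$.

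The paper resolves the obstacle differently, and the idea you are missing is to exploit the \emph{multiplicity} of the current VDPC rather than collapse it to one path. If the current VDPC $\cP_t$ has $s_t$ paths, then for every ordered pair $(i,j)$ one forms a Hamilton path $H_{i,j}$ whose ends are the left end of $P_i$ and the right end of $P_j$; thus one already has $s_t$ distinct starting endpoints on each side before doing any rotations. To grow each side to $\beta\gamma n$ one then needs only $l_t\approx \log_{1.1}(n/s_t)$ rotations, not $\log_{1.1} n$. The $M$-loss per successful round is therefore $O(\log(n/s_t))$, and the total is controlled by a three-regime accounting: while $s_t\ge n/(\log\log n)^8$ each round costs $O((\log\log\log n)^2)$ and there are at most $s_0\le 2\mu$ such rounds; once $s_t$ drops below $n/(\log\log n)^8$ the per-round cost rises to $O((\log\log n)^2)$ but only $n/(\log\log n)^8$ rounds remain; and similarly below $n/(\log n)^8$. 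Summing gives $r_\tau\le r_0+O(\mu(\log\log\log n)^2)\le n/(\log\log n)^{6-\ep-1/(10k)}$, which is exactly the budget. Your outline would be repaired by replacing the single concatenation in Step~1 with this family of concatenations and carrying the parameter $s_t$ through the iteration.
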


\begin{proof}
Let $M$ be a maximum $2$-matching of $G$,
$\tau=\frac{n}{\log\log n}$ and $E_R=\{e_1,e_2,...,e_{\tau}\}$. 
Properties (ii) and (iii) imply that upon removing an edge from every cycle, $M$ defines a VDPC, say $\cP_0$, of size at most $\frac{2n}{(\log\log n)^{6-\epsilon}}$ that intersects $M$ in at least $|M|-\frac{2n}{(\log\log n)^{6-\epsilon}}$ edges.

For $0\leq t \leq \tau$ let  $G_t=G\cup \{e_1,e_2,...,e_{t}\}$ and $s_t$ be the minimum size of a VDPC of $G_t$ that intersects $M$ in at least $|M|-r_t$ edges, where $r_t$ is defined as follows. $r_0 = \frac{2n}{(\log\log n)^{6-\epsilon}}$ and therefore $s_0\geq 2r_0$. We also let $s_{-1}=n$. Thereafter, for $t\geq 1$ if $s_{t-1}=s_{t-2}$ then $r_{t}=r_{t-1}$. Else,
\begin{align*}
    r_t=r_{t-1}+ 
    \begin{cases}
(\log\log\log n)^2 &\text{ if  } s_t\geq \frac{n}{(\log\log n)^8},
\\(\log\log n)^2 &\text{ if  }  \frac{n}{(\log n)^8} <s_t < \frac{n}{(\log\log n)^8},
\\(\log n)^2 &\text{ if  } s_t\leq \frac{n}{(\log n)^8}.
\end{cases}
\end{align*}
As $s_t$ is decreasing
$r_t \leq r_0+ s_0(\log\log\log n)^2$ for $t\geq 0$ and if $s_{t}=0$ for some $t>0$ then $G_{t}$ spans a Hamilton cycle that intersects $M$ in at least $|M|-r_t\geq   n-\frac{n}{(\log\log n)^{6-\epsilon-1/10k}}$ edges for sufficiently large $n$. 

Now let $t\geq 1$, $\cP_t=\{P_1,P_2,\ldots,P_{s_t}\}$ be a VDPC of $G_t$ of size $s_t$ that intersects $M$ in at least $|M|-r_t$ edges,  $l_t=\log_{1.1}\frac{n}{\max\{s_t-1,1\}}+4$ and $l_t'=l_t +\log_{1.1}\log n$. For $P_i \in \cP_t$ let $v_{i,1},v_{i,2}$ be its two endpoints. For each pair $i\neq j \in [s_t]$ we introduce a set of edges $F_{i,j}$ of size $s_t-1$ such that  $F_{i,j} \cup \big(\bigcup_{P\in \cP_t} E(P)\big)$ spans a Hamilton path $H_{i,j}$ from $v_{i,1}$ to $v_{j,2}$.
We let $V_{left}=\{v_{i,1}:i \in [s_t]\}$ and for $v=v_{i,1}\in V_{left}$ we denote by $\cH_{t,v,left}$ the set of Hamilton paths $\{P_{i,j}:j\in[s_t]\setminus\{i\}\}$ if $s_t>1$. Otherwise we let $\cH_{t,v,left}=\{P_1\}.$  

Thereafter, for $v\in V_{left}$ we let $\cH_{t,v,left,l_t}$ be the set of Hamilton paths that can be obtain from some path in $\cH_{t,v,left}$, via at most $l_t$ P\'{o}sa rotations that fix the vertex $v$ and with the restriction that the inserted edges do not belong to $F$. Having generated the sets $\cH_{t,v,left,l_t}$ we let $V_{right}\subseteq V$ be the set of vertices $v'\in V$ for which there exists at least $\max\{\frac{s_t}{\log n},1\}$ sets $\cH_{t,v,left,l_t}$, $v\in V_{left}$ containing a Hamilton path from $v$ to $v'$. We then let for $v'\in V_{right}$, $\cH_{t,v,right}$ be a set containing $\max\{\frac{s_t}{\log n},1\}$ of those Hamilton paths with pairwise distinct endpoints. 

We then let $\cH_{t,v',right,l_t'}$ be the set of Hamilton paths that can be obtained from some path in $\cH_{t,v',right}$ via at most $l_t'$ P\'{o}sa rotations that fix the vertex $v'$ and with the restriction that the inserted edges do not belong to $F$. Finally we let $\cH_t$ be a maximal set of Hamilton paths in $\cup_{v'\in V_{right}}\cH_{t,v',right,l_t'}$ with pairwise distinct endpoints. The claim that shortly follows will be utilized to lower bound the size of $\cH_t$.  

For $\ell \geq 0$, $dir\in \{left,right\}$ and $v\in V_{dir}$ we let $End_{v,\ell,dir}$ be the set of endpoints of Hamilton paths in $G\cup F$ that can be obtain from some path in $\cH_{t,v,dir}$ via at most $\ell$ P\'{o}sa rotations that fix the vertex $v$ and with the restriction that the inserted edges do not belong to $F$. In addition we let $Pivot_{v,\ell,dir}$ be the  corresponding set of $Pivot$ vertices. 
\begin{claim}\label{claim:Ham:2}
For $\ell \geq 0$, $dir\in \{left,right\}$ and $v\in V_{dir}$ at least one of the following holds,
\begin{itemize}
\item[(i)] $\ell \leq 4$,
\item[(ii)] $|End_{v,\ell+1,dir}|\geq 1.1|End_{v,\ell,dir}|$,
\item[(iii)] $End_{v,\ell+1,dir}\cup  Pivot_{v,\ell+1,dir}$ spans at least  $1.1|End_{v,\ell+1,dir}\cup  Pivot_{\ell+1}|$ edges in $G$,
\item[(iv)] some endpoint in $|End_{v,\ell+1,dir}|$ is obtained via a P\'osa rotation at which an edge in $F$ is deleted.
\end{itemize}
\end{claim}
\textbf{Proof of Claim \ref{claim:Ham:2}:}
Fix $\ell \geq 0$, $dir\in \{left,right\}$ and $v\in V_{dir}$ and assume that (iv) does not hold. Let  $R_1=End_{v,\ell,dir}$, $R_2=End_{v,\ell+1,dir} \setminus End_{v,\ell,dir}$, $R=R_1\cup R_{2}=End_{v,\ell+1,dir}$ and $L=Pivot_{v,\ell+1,dir} \setminus R$. Let $S$ be the subgraph of $G_t$ induced by $R \cup L$. 

Let $u\in End_{v,\ell,dir}$ and $P_{v,u}$ be a $v-u$ Hamilton path. At a P\'osa rotation applied to $P_{v,u}$, that may follow, one of the at least 3 edges incident to $u$ in $E(G)\setminus E(P_{v,u})$ may be chosen to be inserted making its other endpoint, say $r$, a pivot vertex. Thereafter an edge incident to $r$ in $E(P_{v,u})$ will be removed resulting to a possibly new endpoint. As Condition (iv) of Claim \ref{claim:Ham:2} does not apply, the removed edge belongs to $E(G)$.
Thus, in $S$ every vertex in $R_{1}$ and $R_{2}$ respectively has degree at least 3 and 1 respectively while every vertex in $L$ is adjacent to at least 2 vertices in $R$.

Now if $|End_{v,\ell+1,dir}|\leq 1.1|End_{v,\ell,dir}|$ and $|R| > 0.6|L|$ then,  
$$ \frac{2|E(S)|}{|V(S)|} \geq \frac{2|L|+3\cdot 0.9|R|+0.1|R|}{|L|+|R|} > \frac{2+3\cdot 0.9\cdot 0.6+0.1\cdot 0.6}{1+0.6}= 2.3.$$

On the other hand if $|End_{v,\ell+1,dir}|\leq 1.1|End_{v,\ell,dir}|$ and $|R| \leq 0.6 |L|$ then,
$$ \frac{|E(S)|}{|V(S)|} \geq \frac{2|L|}{1.6|L|}>1.25.$$

Thus if neither of Conditions (ii), (iv) of  Claim \ref{claim:Ham:2} apply  then $R\cup L$ either spans at least $1.1|R \cup L|+1$ many edges (i.e. Condition (iii) holds) or $|R\cup L| \leq 9$. Now recall that $|Pivot_{v,1,dir}|=|End_{v,1,dir}|=d(v)-1\geq 2$ and in the graph spanned by $S'=End_{v,5,dir}\cup Pivot_{v,5,dir}$ every vertex in $End_{v,4,dir}$ has degree at least 3 while every vertex in $Pivot_{v,5,dir}$ has degree at least 2. Thus $|V(S')|\geq 10$ and if $|R\cup L| \leq 9$ then $\ell \leq 4$. 
\qed

First assume that in the process of generating $\cH_t$ at some P\'osa rotations, an edge from $F$ was removed resulting to a Hamilton path $P$. Then, as $P$ was generated via a sequence of at most $(l_t+l_t')$ P\'osa rotations, $E(P)\setminus F$ defines a path covering of size $s_t-1$ that intersects $M$ in at least $|M|-r_t-2(l_t+l_t')$ edges. 

Otherwise, Claim \ref{claim:Ham:2} together with Condition (vi) implies that either $|End_{l_t,v,left}|\geq 1.1^{l_t-4}(s_t-1) \geq n$ or $|End_{l,v,left} \cup Pivot_{l,v,left}| \geq \gamma n$ for some $l\leq l_t$. In the second case, as every vertex in $Pivot_{l,v,left} \setminus End_{l,v,left}$ has at least 2 neighbors in $End_{l,v,left}$ Condition (vii) implies that $|End_{l_t,v,left}| \geq |End_{l,v,left}| \geq \beta\gamma n$.

Hence,
$$|V_{right}| \geq \frac{s_t\cdot \beta \gamma n-\frac{s_t}{\log n}\cdot n}{s_t} \geq \frac{\beta \gamma n}{2}.$$
Thereafter, Claim \ref{claim:Ham:2} implies that $|End_{l_t',v,right}|\geq \beta\gamma n$ for $v\in V_{right}$ and therefore,
$$\cH_t \geq \frac{(\beta\gamma)^2n}{4}.$$ 

For $P \in \cH_t$ let $P_1,P_2$ be its endpoints and $Q_t=\set{\{P_1,P_1\}:P\in \cH_t}$. As $P\in \cH_t$ was generated via a sequence of at most $(l_t+l_t')$ P\'osa rotations, $(E(P) \cup \{P_1,P_2\}) \setminus F$ defines a path covering of size $s_t-1$ that intersects $M$ in at least $|M|-r_t-2(l_t+l_t')$ edges (since after every P\'osa the ``current" path differs in 2 edges from its ``predecessor"). 

. 

Thus,
\begin{align*}
2(l_t+l_t')\leq
\begin{cases}
(\log \log \log n)^2 &\text{ if  } s_t\geq \frac{n}{(\log \log n)^8},
\\(\log \log n)^2 &\text{ if  } \frac{n}{(\log n)^8}< s_t < \frac{n}{(\log \log n)^8},
\\(\log n)^2 &\text{ if  }  s_t \leq \frac{n}{(\log n)^8}.
\end{cases}
\end{align*}
Hence,
\begin{align}\label{bound_prob_rot1}
    \Pr(s_{t+i}< s_t)\geq \Pr(e_{t+i}\in Q_t \setminus E')\geq (1+o(1))(\beta \gamma)^2/2,
\end{align}
and
\begin{align*}
    \Pr(s_\tau >0) &\leq \Pr(Binomial(\tau,(\beta \gamma)^2/3)\leq \tau/(\log\log n)^3)+o(n^{-1})
\\& \leq  \binom{\tau}{\tau/(\log\log n)^3}\bigg(1-\frac{(\beta\gamma)^2}{3} \bigg)^{(1+o(1))\tau}+o(n^{-1})
\\&\leq  \bigg(e(\log\log n)^3)\bigg)^{\frac{\tau}{(\log\log n)^3}}\cdot e^{-0.3(\beta\gamma)^2\tau} o(n^{-1})=o(n^{-1}).
\end{align*}
Hence with probability $1-o(n^{-1})$ we have that $s_{\tau}=0$ yielding a VDPC of size 0, hence a Hamilton cycle in $G\cup E_R$, that intersects $M$ in at least $|M|-\frac{n}{(\log\log n)^{6-\epsilon-10/k}}$ edges.
\end{proof}

We will use the following Lemma to augment the final 2-matching to a 2-factor.

\begin{lemma}\label{lem:builting2factor}
Let $G=(V=[n],E)$ be a graph,  $E',E_R\subset \binom{[n]}{2}\setminus E$,  and $\beta,\gamma,\epsilon \in (0,1)$ be such that 
\begin{itemize}
    \item[(i)] $G$ has minimum degree $3$,
    \item[(ii)] $E$ spans a 2-matching $M$ of size $n-\frac{n}{(\log\log n)^{4}}$,
\item[(iii)] $|E'|=o(n^2)$ and
 $|E_R| = \frac{n}{\log\log n }$,
\item[(v)]  $E_R$ is distributed uniformly over the subsets of $\binom{[n]}{2}\setminus (E\cup E')$ of size $|E_R|$,
\item[(vi)] every set $S\subset V$ of size less than $\gamma n$ spans less than $1.1|S|+1$ edges.
%\item[(vii)] every set $S\subset V$ of size less than $ \beta \gamma n$ is incident to less than $2(1-\beta)\gamma n$ edges.
\end{itemize}
Then, with probability $1-o(n^{-1})$, $G \cup E_R$ spans a 2-factor. 
\end{lemma}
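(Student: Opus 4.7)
Plan of proof.

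The argument mirrors the template of Lemma~\ref{lem:builtingmatch}: P\'osa rotations provide many alternative closings of each path, and the random edges of $E_R$ realize one of them. It is strictly easier than the Hamilton cycle case because a 2-factor need not be connected; each path of $M$ may become its own cycle, so no merging step is required.

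Begin by decomposing $M$ into its cycles (which appear unchanged as components of the desired 2-factor) and its paths $P_1,\dots,P_p$ of length $\geq 0$; the bound $|M|\geq n-n/(\log\log n)^4$ forces $p\leq n/(\log\log n)^4$. In a short preprocessing phase, absorb each isolated vertex $v\in V_0$ by picking a $G$-neighbor $a$ of $v$, deleting one $M$-edge incident to $a$ (if $d_M(a)=2$), and adding $\{v,a\}$ to $M$; similarly extend each length-$1$ path $\{u,v\}$ to length $\geq 2$ by picking a $G$-neighbor $w$ of $u$ outside the path and folding it into the 2-matching (again deleting one $M$-edge at $w$ if necessary). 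The min-degree-$3$ hypothesis supplies the required neighbors, and each modification changes the number of paths by at most one, so the resulting 2-matching $M'$ has $p'\leq 3n/(\log\log n)^4$ paths, all of length $\geq 2$, and $|M'|\geq|M|-o(n)$.

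For each path $Q$ of $M'$ with endpoints $u,v$, perform P\'osa rotations in $G\setminus M'$ fixing $u$ and then each successive right-endpoint, as in Claim~\ref{claim:Ham:2}. The min-degree-$3$ hypothesis (which guarantees at least two non-$M'$-edges at every path-endpoint) and the small-set expansion hypothesis (vi) yield a set $W_Q\subseteq V(Q)\times V(Q)$ of closable endpoint pairs of size $|W_Q|=\Omega(\gamma^2 n^2)$. Hypothesis (vii) of Lemma~\ref{lem:builtingmatch} is \emph{not} required here, because we need only the aggregate lower bound $|W_Q|=\Omega(n^2)$ rather than a lower bound on a single one-sided rotation cloud: any $E_R$-edge $\{u',v'\}$ belonging to $W_Q$ closes the corresponding rerouted path into a cycle on $V(Q)$, leaving every other component of $M'$ intact.

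A given $E_R$-edge lies in $W_Q$ with probability $|W_Q|/\binom{n}{2}=\Omega(\gamma^2)$, and $|E_R|=n/\log\log n\gg p'$, so a greedy exposure of $E_R$ --- crediting each edge to the first path it is able to close --- succeeds for all $p'$ paths with probability $1-o(n^{-1})$ by a Chernoff bound followed by a union bound over paths. The principal technical obstacle is the preprocessing step: one must absorb every isolated vertex and extend every short path without cascading into the destruction of the 2-matching or of the small-set expansion. This is controlled by the smallness of $V_0$ and of the set of length-$1$ paths (each bounded by $p=o(n)$) together with the min-degree-$3$ hypothesis, which keeps an abundance of feasible neighbors at every step.
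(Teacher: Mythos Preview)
Your approach has a genuine gap. You claim that for each path $Q$ of $M'$, P\'osa rotations produce a set $W_Q\subseteq V(Q)\times V(Q)$ of closable endpoint pairs with $|W_Q|=\Omega(\gamma^2 n^2)$. But trivially $|W_Q|\leq |V(Q)|^2$, and nothing in your setup prevents individual paths from being short: your preprocessing only guarantees length at least $2$, so $|V(Q)|$ can be as small as $3$, whence $|W_Q|\leq 9$. A random $E_R$-edge then lands in $W_Q$ with probability $O(|V(Q)|^2/n^2)=O(n^{-2})$, and with only $n/\log\log n$ random edges available you cannot close such a path with the required probability. More generally, with up to $3n/(\log\log n)^4$ paths the \emph{average} path has only $\Theta((\log\log n)^4)$ vertices, far too few for $|W_Q|=\Omega(n^2)$. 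The ``close each path in place'' strategy is doomed because P\'osa rotations confined to $V(Q)$ cannot manufacture more than $|V(Q)|^2$ endpoint pairs, and hypothesis (vi) says nothing useful inside a set of size $3$.

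The paper avoids this by working globally, via $2$-matching \emph{augmentation} rather than per-path closing. If the current maximum $2$-matching $M_t$ of $G_t=G\cup\{e_1,\dots,e_t\}$ has $|M_t|<n$, one picks any $v$ with $d_{M_t}(v)\leq 1$ and follows $M_t$-alternating paths from $v$ through the whole graph $G$, not just through the component of $M_t$ containing $v$. An expansion argument using the minimum-degree-$3$ hypothesis together with (vi) --- in the spirit of Claim~\ref{claim:Ham:2}, but carried out for the layered structure of even-reached versus odd-reached vertices --- forces the set $Q_v'$ of vertices reachable by even-length alternating paths to satisfy $|Q_v'|\geq \gamma n/3$. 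Repeating from a second deficient vertex $w$ (with respect to each of the $|Q_v'|$ shifted maximum $2$-matchings) then yields $\Omega(\gamma^2 n^2)$ pairs $\{x,y\}$ for which adding $\{x,y\}$ increases the $2$-matching size by one. Each random edge thus succeeds with probability bounded below by a constant, and a Chernoff bound closes the deficit of at most $n/(\log\log n)^4$ within the $n/\log\log n$ available edges.
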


\begin{proof}
Given a $2$-matching $M'$ of $G$, we say that the path $P=v_0,e_1,v_1,.....,e_s,v_s$ is  $M'$-alternating if its odd indexed edges do not belong to $M'$ whereas its even indexed edges do (here we slightly abuse the traditional definition of alternating paths where $E(P)\cap M$ consists either of the odd or of the even indexed edges of $P$). We say that $P$ is $M'$-augmenting if it is an $M'$-alternating path  of odd length. Hence if $P$ is $M'$-augmenting then  $M'\triangle E(P)$ is a $2$-matching of size $|M'|+1$. In addition for a $2$-matching $M'$ and $v\in V(G)$ we denote by $d_{M'}(v)$ the number of edges that are incident to $v$ in $M'$.

Let $\tau=\frac{n}{\log\log n}$ and  $E_R=\{e_1,e_2,...,e_{\tau}\}$. For $0\leq t \leq \tau$ let  $G_t=G\cup\{e_1,e_2,...,e_{t}\}$ and $M_t$ be a maximum $2$-matching of $G_t$. If $|M_t|<n$ let $v,w\in [n]$ be such that $d_{M_t}(v), d_{M_t}(w)\leq 1$. In the case that $d_{M_t}(v)=0$ we may let $w=v$. For $u\in V(G)$ we let $P_u$ be the shortest $M_t$-alternating path from $v$ to $u$ if such a path exists, otherwise we let $P_u=\emptyset$. Define the sets $$Q_v:=\{u\in V(G): v=u \text{ or }|P_u|= 0\text{ mod }2 \text{ and }P_u \neq \emptyset\}$$ and $$W_v:=\{u\in V(G): |P_u|= 1\text{ mod } 2\}.$$ Let $S$ be the subgraph of $G_t$ induced by $Q_v\cup W_v$. As $M_t$ is maximum every vertex $u\in W_v$ is incident to 2 edges in $M_t$ (otherwise $P_u$ is $M_t$-augmenting) and has 3 neighbors in $V(S)$, 1 defined by $P_u$ and 2 defined by $M_t$. Moreover every vertex in $Q_v$ has at least 2 neighbors in $V(S)$ and at least 1 in $W_v$, one defined by $M_t$ and all of its neighbors via edges not in $M$. Thus either $|Q_v| \geq 0.3|W_v|$ and 
$$\frac{2|E(S)|}{|V(S)|} \geq \frac{(2+0.3\cdot 3)|W_v|}{(1+0.3)|W_v|}> 2.23$$ or 
$|Q_v| \leq 0.3|W_v|$ and 
$$\frac{|E(S)|}{|V(S)|} \geq \frac{|W_v|+|W_v|/2}{1.3|W_v|}\geq 1.15$$
Thus either  $V(S)$ spans at least $1.1|S|+1$ edges and therefore $|S| \geq \gamma n$  or $|S| \leq 34$. Now observe that as every vertex in  $Q_v$ is incident to 2 edges in $M_t$ the number of vertices $u$ for which $P_u$ has length $\ell$ is at least $2,4,2,4,4,8,8$ and $16$ for $\ell=1,2,3,4,5,6$ and $7$ respectively. Thus $|S|>34$ and therefore  $|S|\geq \gamma n.$

Now let $Q_v'=Q(v,M_t,G_t)$ be the set of vertices that are reachable from $v$ via an $M_t$-alternating path of even length. Observe that if $z\in N(Q_v')$ then $z$ is incident to  some vertex in $Q_v'$ via an edge in $M_t$ and hence $|N(Q_v')| \leq 2|Q_v'|$. Indeed, assume otherwise. Then there exist  $z\in N(Q_v')$ and $u\in Q_v'$ such that $\{u,z\} \in G_t\setminus M_t$ and $z$ does not have an $M_t$-neighbor in $Q_v'$. The edge $\{u,z\}$ gives rise to an $M_t$-alternating path $P$ from $v$ to $u$ to $z$. Now if $d_{M_t}(z)=0$ then $P$ is $M_t$-augmenting contradicting the maximality of $M_t$. Otherwise there exists some edge $\{z,z'\}\in M_t$. In such a case the path $P,\{z,z'\},z'$ witnesses the candidacy of $z'$ in $Q_v'$ which gives a contradiction. 

Finally observe that $V(S)\subset Q_v'\cup N(Q_v').$ As $|N(Q_v')| \leq 2|Q_v'|$ we have that $|Q_v'| \geq |S|/3 \geq \gamma n/3.$

For every vertex $u\in Q_v'$ the underlying $M_t$-alternating path $P_{v,u}$ from $v$ to $u$ defines a maximum $2$-matching $M_u=M_t\triangle E(P_{v,u})$ of $G_t$ such that $d_{M_u}(u),d_{M_u}(w)\leq 1$. Now, by repeating the same argument with $M_u$ in place of $M_t$ and $w$ in place of $v$ we can define in a similar manner the set $Q_{u,w}'$ (in place of $Q_v'$). This gives a set $\mathcal{M}$ of at least $\gamma^2n^2/18$ couples $(\{x,y\},M_{\{x,y\}})$ where $x\in Q_v'$, $y\in Q_{x,w} \cap V_1$, $M_{\{x,y\}}$ is a maximum $2$-matching of $G_t$ and $d_{M_{\{x,y\}}}(x),d_{M_{\{x,y\}}}(y)\leq 1$. Thus if $e_{t+1}=e$ for some $\{e,M_e\} \in \cM$ then $\{e\}\cup M_e$ is a 2-matching of  $G_{t+1}$ of size $|M_t|+1>|M_t|$.

Hence, the probability that $G_\tau$ does not span a 2-factor is bounded above by
\begin{align*}
 \Pr\bigg( Bin\bigg(\frac{n}{\log\log n},\frac{\gamma^2 n^{2}}{18}\bigg) \leq \frac{n}{(\log\log n)^4} \bigg)=o(n^{-1}).
\end{align*}

\end{proof}

\textbf{Proof of Theorem \ref{thm:ham}}:
 We let $M_1$, $M_2$,...,$M_{k-1}$, $E_R$,$E'$,  $G'=G\setminus E_R$ be the matchings, edge sets and graph promised by Lemma \ref{lem:kGreedy2}. We  randomly partition $E_R$ into $k-1$ sets $R_1,R_2,...,R_{k-1}$ of size $\frac{n}{\log\log n}$. Having constructed  Hamilton cycles $H_1,H_2,...,H_{i-1}$, $i< (k-3)/2$ such that $E(H_j)\setminus (M_{2j-1}\cup M_{2j})$ has size at most $\frac{n}{(\log\log n)^{6-j/k}}$ for $j\in[i-1]$ we construct a Hamilton cycle $H_i$ with $E(H_i)\subset E(G)\setminus \big(\cup_{j\in[i-1]}E( H_j)\big)$ such that $E(H_i)\setminus (M_{2i-1}\cup M_i)$ has size at most $\frac{n}{(\log\log n)^{6-i/k}}$.
 
 For that we apply Lemma \ref{lem:builtingmatch} with 2-matching $M_i'=(M_{2i-1}\cup M_{2i})\setminus \big(\cup_{j\in[i-1]} E(H_j)\big)$, graph of minimum degree 4 $G_i'=G'\setminus \big(\cup_{j\in[i-1]} H_j\big)$, the set of random edges $R_i$, the set of forbidden edges $E_i'=E'\cup   
 \big(\cup_{j\in[i-1]} E(H_j)\big)
 \cup \big(\cup_{j\in[i-1]} R_j\big)$, $\beta=\beta_1$, $\gamma=\gamma_1$  (as in Lemma \ref{lem:expansion}) and $\epsilon= 9i/10k$. Then, $|M_i'|\geq |M_{2i-1}|+|M_{2i}|- \sum_{j=1}^{i-1} \frac{n}{(\log\log n)^{6-j/k}} \geq n- \frac{n}{(\log\log n)^{6-\epsilon}}.$ Lemma \ref{lem:expansion} implies that the rest of the conditions of Lemma \ref{lem:builtingmatch} hold with probability $1-o(n^{-1})$.
 Thus with probability $1-o(n^{-1})$, $G_i'\cup R_i$
 spans a Hamilton cycle $H_i$ that intersects $M_i'$ in at least $n-\frac{n}{(\log\log n)^{6-9i/10k+i/10k}}= n-\frac{n}{(\log\log n)^{6-i/k}}.$
 
 Finally if $k$ is even then $M_{k-1}\setminus \big(\cup_{j\in[(k-2)/2]} E(H_j)\big)$ is a matching of size  at least
 $$n/2-O\bigg(\sum_{j=1}^{(k-2)/2} \frac{n}{(\log\log n)^{6-j/k}}\bigg)=n/2-o(n).$$
On the other hand, if $k$ is odd then  
$M_{last}=M_{k-2} \cup M_{k-1}\setminus \big(\cup_{j\in[(k-3)/2]} H_j\big)$ is a 2-matching of size $n-o\bfrac{n}{(\log \log n)^4}$. Let $G_{last}=G'\setminus \big(\cup_{j\in[(k-3)/2]}E( H_j)\big)$. Then $G_{last}$ has minimum degree 3 and 
Lemma \ref{lem:builting2factor} implies that $G_{last}\cup R_k$ spans a 2-factor with probability $1-o(n^{-1})$. 
\qed

\section{Packing Hamilton Cycles in  $G_{t}^{(k)}$}\label{sec:gnp}

Recall we denote by $G_0,G_1,...,G_{\binom{n}{2}}$ the random graph process, $V(G_0)=[n]$. For the proof of Theorem\ref{thm:hamCores} we will need Lemma \ref{lem:expansion2} which replaces Lemma \ref{lem:expansion} in  the case that the underlying graph $G_i$ has sufficiently many edges. Its proof is found at Appendix \ref{app:lemmas2}.

\begin{lemma}\label{lem:expansion2}
W.h.p. for $k^{100}n\leq i\leq n\log n$,
\begin{itemize}
    \item[(i)] every set $S\subset V(G)$ of size  $|S| \leq \frac{3n}{\log^2 n}$  spans less than $1.1|S|+1$ edges in $G_i$,
\item[(ii)] there does not exists a set $S\subset V(S)$ of size  $\frac{n}{\log^2 n} \leq |S| \leq \frac{n}{100k}$ such that $N(S)\cup S$ induces a connected subgraph of $G_i$ and  $|N(S)| < k|S|$,
\item[(iii)] $|V(G_i^{(k)})| \geq (1-e^{\frac{i}{40n}})n$,
\item [(iv)] for every subgraph $F$ of $G_i$ of maximum degree $k-2$ the graph $G_i^{(k)}\setminus F$ spans a matching of size at least $0.5n-\frac{n}{\log\log n}$.
\end{itemize}
\end{lemma}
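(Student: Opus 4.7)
The plan is to handle each of the four parts in turn, using monotonicity where available and first moment / concentration arguments otherwise. Part (i) is monotone decreasing in the graph (more edges can only increase what $S$ spans), so it suffices to verify it at $G_{n\log n}$. A first moment calculation bounds the expected number of bad $s$-subsets by $\binom{n}{s}\binom{\binom{s}{2}}{\lceil 1.1s+1\rceil}p^{\lceil 1.1s+1\rceil}$ with $p\le 3\log n/n$; using the sharp Chernoff upper-tail bound and summing over $1\le s\le 3n/\log^2 n$ yields $o(n^{-1})$. The monotonicity then handles all $i$ in the range.

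For (ii), fix $s$ with $n/\log^2 n\le s\le n/(100k)$ and set $T:=S\cup N_{G_i}(S)$, $t:=|T|\le(k+1)s$. Since this property is not cleanly monotone in $i$, I would apply a first moment argument at each $i$ and then union-bound over $i$ in a polynomial net. Because $p_i\ge 2k^{100}/n$, the event $N(S)\subseteq T$ (equivalently, no edges from $S$ to $V\setminus T$) has probability at most $(1-p_i)^{s(n-t)}\le e^{-k^{100}s}$. The connectedness of $G_i[T]$ allows us to encode $T$ by a connected subgraph of $G_i$ on $t$ vertices containing a root; a standard count gives at most $(2e\Delta)^{t-1}$ such subgraphs per root, with $\Delta(G_i)=O(\log n)$ w.h.p. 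Combining the root choice ($n$), the subgraph count, the binomial choice $\binom{t}{s}\le 2^t$ of $S$ inside $T$, and the edge-absence probability, the factor $e^{-k^{100}s}$ dominates the entropic terms for $k$ sufficiently large, yielding $o(n^{-1})$ per $i$.

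For (iii), I would invoke the Pittel--Spencer--Wormald formula: when $c:=i/n\ge k^{100}$, the fraction of vertices outside the $k$-core is of order $e^{-\Omega(c)}$, well below $e^{-i/40n}$. Concentration follows from a Chernoff bound on the number of vertices of $G_i$ of degree less than $k$, together with a short peeling argument (using part (ii) to bound the cascade size during core construction). A geometric net of $i$-values, combined with monotonicity of the $k$-core under edge insertion, extends the bound to every $i$ in the range.

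For (iv), set $H:=G_i^{(k)}\setminus F$; since $\delta(G_i^{(k)})\ge k$ and $\Delta(F)\le k-2$, $\delta(H)\ge 2$. Following the proof of Lemma \ref{lem:pillmatch}, let $S^*$ be a Tutte--Berge minimiser: every odd component of $H-S^*$ is either an isolated vertex (with all neighbours in $S^*$, so controlled by (ii) applied with $2$ in place of $k$) or cycle-containing (controlled by (i), which as in Lemma \ref{lem:expansion}(iii) limits the number of vertex-disjoint short cycles in $G_i$). Plugging these into \eqref{eq:T-B} yields a matching of size at least $n/2-n/\log\log n$. The main obstacle is part (ii): the entropic cost of specifying a set of size as small as $n/\log^2 n$ in a graph with $\Theta(n\log n)$ edges is substantial, and beating it requires tight use of the connected-subgraph encoding afforded by the connectedness hypothesis.
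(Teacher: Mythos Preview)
Part (i) is exactly the paper's argument. The real gap is in (ii). You encode $T=S\cup N(S)$ as a connected subgraph of the \emph{realized} $G_i$ and bound the number of such subgraphs by $n(2e\Delta)^{t-1}$ with $\Delta=\Delta(G_i)=O(\log n)$; this contributes $\exp\bigl(\Theta((k+1)s\log\log n)\bigr)$ to the union bound. Against it you only have $(1-p_i)^{s(n-t)}\le e^{-\Theta(np_i)s}$, but at the bottom of the range $np_i\approx 2k^{100}$ is a \emph{constant} (recall $k=O(1)$ here), so for all sufficiently large $n$ the entropy term wins and your bound diverges. The phrase ``$e^{-k^{100}s}$ dominates the entropic terms for $k$ sufficiently large'' is not available to you: $k$ is fixed while $\log\log n\to\infty$. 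The paper avoids this by never conditioning on $G_i$. It sums over labelled vertex sets of size $s+t$, bounds the probability that $G_i$ is connected on that set by the expected number of spanning trees $(s+t)^{s+t-2}p_i^{\,s+t-1}$ (Cayley), and multiplies by the independent edge-absence factor $(1-p_i)^{s(n-s-t)}$. The resulting weight is of order $\bigl((2enp_i)^{k+1}e^{-np_i/2}\bigr)^{s}$, and since $(2ex)^{k+1}e^{-x/2}<1$ for every $x\ge 2k^{100}$ the sum over $s,t,i$ is $o(1)$. The crucial point is that Cayley ties the connected-set count to $np_i$ itself rather than to the much larger $\Delta(G_i)$.

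Your (iv) inherits a related problem. You propose to feed the set $I$ of isolated vertices of $H-S^*$ into (ii), but (ii) is a statement about $G_i$, not about $H=G_i^{(k)}\setminus F$; you have verified neither that $G_i[I\cup N_{G_i}(I)]$ is connected (it need not be) nor that $|I|\le n/(100k)$, and the neighbours of $I$ in $G_i$ include up to $(k-2)|I|$ vertices coming from $F$ plus possibly vertices outside the $k$-core, so the inequality $|N_{G_i}(I)|<k|I|$ is not immediate. The paper instead runs a separate first-moment bound tailored to (iv): if $\alpha'(H)$ is too small then Tutte--Berge produces disjoint $S,T\subset[n]$ with $|S|=|T|=s\ge n/(2\log\log n)$ such that in $G_i$ every vertex of $S$ has at least two neighbours in $T$ and at most $k-2$ elsewhere, and one bounds $\sum_{i,s}\binom{n}{2s}2^{2s}(s^2p_i^2/2)^s(np_i)^{(k-2)s}(1-p_i)^{\Theta(sn)}=o(1)$ directly. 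Your sketch for (iii) is plausible but the paper's route is again more direct: it simply union-bounds over sets $S$ of the relevant size in which every vertex has at most $k-1$ neighbours in $[n]\setminus S$.
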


{\em Proof of Theorem\ref{thm:hamCores}(sketch):}  We consider 3 distinct intervals that partition $\{0,1,...,n(n-1)/2\}$.

\noindent \textbf{Case 1:} $0\leq i \leq k^{101}n$. The fact that $G_{i}^{(k)}$ is either empty or has order linear in $n$  and it is distributed as $G_{n,m}^{\delta \geq k}$ together with  Theorem \ref{thm:ham} implies that w.h.p. $G_i^{(k)}\in \cA_k'$ for $i\leq k^{101}n$.

\noindent \textbf{Case 2:} $k^{101} n\leq i \leq n\log n$. For this regime we condition on the events described at Lemma \ref{lem:expansion2} occurring.

We first reveal the edges of $G_{i/k}$ and then the edges of $G_i$ that are not incident to vertices of $V(G_{i/k}^{(k)})$. We let $F_i$ be the graph consisting of the edges revealed so far and $R$ be the set of edge of $G_i$ that have not been revealed yet. Observe that as every vertex outside $G_{i/k}^{(k)}$ is incident to the same set of edges in both 
$F_i$ and $G_i$ and $G_{i/k}^{(k)} \subseteq G_i^{(k)}$ we have that $V(F_i^{(k)})=V(G_i^{(k)})$ and $F_i^{(k)} \subseteq G_i^{(k)}$. In addition with $V_1=V(G_{i/k}^{(k)})$ part  (iv) of Lemma \ref{lem:expansion2} implies that $|V_1| \geq (1-e^{i/40n})n$. Moreover one can show that $R\geq 0.5i$  with probability $1-o(n^{-2})$ and $R$ is distributed uniformly among all set of edges spanned by $V_1$, of size $|R|$, that do not intersect $E(G_{i/k}^{(k)})$. We randomly split $R$ into $k-1$ sets $E_1,E_2,...,E_{k-1}$ each of size at least $\lfloor 0.5i/(k-1) \rfloor$.  

We then construct the Hamilton cycles of $G_{i}^{(k)}$ one by one. Having constructed Hamilton cycles $H_1,H_2,...,H_{j-1}$, $j< (k-2)/2$ of $G_i^{(k)}$ such that $H_\ell$ is spanned by $F_i^{(k)} \cup R_\ell$ for $\ell <j$ we construct a Hamilton cycle $H_j$ spanned by $F_j'\cup R_j$ where $F_j'=F_i^{(k)}\setminus \big( \cup_{\ell \in[j-1]} H_j\big)$. Observe that $F_j'$ has minimum degree 4. Let $R_j=\{e_1,e_2,...,e_{\tau_j}\}$, where $\tau_j=|R_j|\geq-1 + 0.5i/k$. For $\ell\in [0.5i-1]$ let $F_{j,\ell}=F_j'\cup\{e_1,e_2,...,e_\ell\}$, $P_\ell$ be a longest path of $F_{j,\ell}$ and $v$ one of the endpoints of $P_\ell$. Let $End(P_\ell,v)$ be the set of endpoints such that if $u\in End(P_\ell,v)$ then there exists a sequence of Posa rotations starting from $P_\ell$ that outputs a longest path in $F_{j,\ell}$ from $v$ to $u$. As shown in the proof of Claim \ref{claim:Ham:2} we have that  there exists  $S\subset End(P_\ell,v)\cup N(End(P_\ell,v))$ such that $S$ spans at least $1.1|S|$ edges of $F_i'$. In addition, P\'osa Lemma states (see \cite{abook}) states that $$|N(End(P_\ell,v)|<2|End(P_\ell,v)|.$$ Thus  Lemma \ref{lem:expansion2} implies that $|End(P,v)| \geq \frac{n}{100k}$, and therefore $|End(P_\ell,v)\cap V_1| \geq \frac{n}{1000k}$.

The rest of the argument is identical to the one used in the  proof of Lemma \ref{lem:builtingmatch} and gives that the probability that $F_i'\cup R_i$ does not span a Hamilton cycle is bounded by,
$$\Pr\bigg(Bin\bigg(0.5i,\frac{n}{2000k}\bigg)\leq n\bigg)\leq \Pr\bigg(Bin\bigg(0.5k^{101}n,\frac{n}{1000k}\bigg)\leq n\bigg)=o(n^{-2}).$$

In a similar manner in the case that $k$ is odd we can built the last 2-matching after pilling off the  $(k-3)/2$ Hamilton cycles. On the other hand in the case that $k$ is even, with $H=\cup_{1\leq j\leq(k-2)/2}H_j$, Lemma \ref{lem:expansion2} implies that  $F_i^{(k)}\setminus H$  spans a matching of size at least $0.5|V(G^{(k)}_i)|-\frac{n}{\log\log n}$.

\noindent \textbf{Case 3:} $n\log n < i \leq \binom{n}{2}$.  Case 2 implies that w.h.p. $G_{n\log n}^{(k)} \in \cA_k'$. Thus, since $G_i\subset G_{i+1}$ for $i\geq 0$ we have,
\begin{align*}
\Pr( \exists i\geq n\log n: G_i^{(k)}\notin \cA_{k}') \leq  \Pr( G_{n\log n} \notin \cA_{k}')+\Pr( G_{n\log n} \neq G_{n\log n}^{(k)})=o(1).    
\end{align*}
\qed

\begin{appendices}
\section{Proof of Lemma \ref{lem:expansion}}\label{app:lemmas}
Lemma \ref{lem:expansion} states the following.
\begin{lemma}
Let $G\sim G_{n,m}^{\delta \geq k}$, where $m=cn$ with $k/2<c=O(1)$ and let $\lambda$ be given by \eqref{eq:lambda}. Let $\beta_1,\gamma_1\in (0,0.1)$ be such that $\bfrac{9e^{1+\lambda} \lambda^2}{cf_k(\lambda)} \bfrac{\gamma_1\lambda }{c}^{0.1}<\frac{1}{2}$ and $[2(k+\lambda) + \log_2(\beta_1\gamma_1)+ 3]\beta_1  < 2(1-\beta_1)$. Then with probability $1-o(n^{-1})$,
\begin{itemize}
\item[(i)] every set $S\subset V(G)$ of size $ |S| \leq \gamma_1n$  spans less than $1.1|S|+1$ edges,
\item[(ii)] every set $S\subset V(S)$ of size  $|S| \leq \beta_1 \gamma_1 n$  is incident to less than $2(1-\beta_1)\gamma_1 n$ edges,
\item[(iii)] $G$ does not span a set of $\frac{2n}{(\log\log n)^6}$ vertex disjoint cycles.
\end{itemize}
\end{lemma}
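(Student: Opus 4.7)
My plan is to prove all three parts in the Poisson model $G_{n,m}^{\delta \geq k, Po(\lambda)}$ and then transfer to $G_{n,m}^{\delta \geq k}$ via \eqref{eq:models}, which costs a factor of $n^{0.5}$; so it suffices to bound each bad event by $o(n^{-1.5})$ in the Poisson model. In that model I can work conditionally on the i.i.d.\ degrees $d_v\sim \cP_{\geq k}(\lambda)$ and then on the uniform random pairing of the $D:=\sum_v d_v$ stubs. Two standard facts will be used throughout: the mean $E[\cP_{\geq k}(\lambda)]=\lambda f_{k-1}(\lambda)/f_k(\lambda)=2c$ by \eqref{eq:lambda}, and a Chernoff bound for sums of truncated Poisson variables, giving $\Pr(D_S\geq as)\leq e^{-c_a s}$ for a set $S\subset[n]$ of size $s$ and $a>2c+O(1)$.

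For part (i), I will union-bound over $s=|S|$ and $t=\lceil 1.1s+1\rceil$. Conditional on the degree sequence, the probability that a given set of $t$ potential edges lies within $S$ in the random pairing is at most $\binom{D_S}{2t}\frac{(2t)!}{2^t t!}(2m)^{-t}\leq (eD_S^{2}/(4mt))^{t}$. Combining with $\binom{n}{s}\binom{\binom{s}{2}}{t}\leq (en/s)^s(es^2/(2t))^t$, a Chernoff bound on $D_S$, and the identity $2m=cn$, the net bound is $\left(K\cdot (s/n)^{0.1}\right)^s$ for some $K=K(k,\lambda,c)$. The hypothesis $\bigl(9e^{1+\lambda}\lambda^{2}/(cf_k(\lambda))\bigr)\bigl(\gamma_1\lambda/c\bigr)^{0.1}<1/2$ is exactly what is needed to make this bounded by $2^{-s}$ for every $s\leq\gamma_1 n$, giving $\sum_{s\geq 4}2^{-s}=o(n^{-1.5})$ after noting that for $s\leq 3$ the event is vacuous.

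For part (ii), the total number of edges incident to $S$ is at most $D_S$, so it is enough to show $\Pr\bigl(\exists S,\,|S|=s\leq \beta_1\gamma_1 n,\,D_S\geq 2(1-\beta_1)\gamma_1 n\bigr)=o(n^{-1.5})$. The moment generating function of $\cP_{\geq k}(\lambda)$ is $f_k(e^\theta \lambda)/f_k(\lambda)$, which yields a Chernoff rate of roughly $2(k+\lambda)+O(1)$ per stub beyond the mean; union bounding over $\binom{n}{s}\leq 2^{s\log_2(e/\beta_1\gamma_1)}$ sets and writing out the exponent shows that the condition $[2(k+\lambda)+\log_2(\beta_1\gamma_1)+3]\beta_1<2(1-\beta_1)$ is precisely what makes the Chernoff exponent beat the entropy term for every such $s$.

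Part (iii) is the hardest and I expect most of the technical work to land here. My approach is to split cycles by length at $L=\lfloor (\log n)/(2\log(\max(2,C))) \rfloor$, where $C:=\lambda f_{k-2}(\lambda)/f_{k-1}(\lambda)$ is the asymptotic growth rate of cycle counts in the configuration model. Long cycles (length $>L$) use more than $L$ vertices each, so there can be at most $n/L\ll n/(\log\log n)^{6}$ of them, which is already smaller than $\tau/2$ where $\tau=2n/(\log\log n)^{6}$. For the short cycles (length $\leq L$), the standard first-moment estimate in the configuration model yields $E[\#\text{cycles of length } l]\leq \frac{1}{2l}(C+o(1))^l$, and summing gives $E[X_{\leq L}]=O(C^L/L)\leq n^{1/2}$. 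Because in the configuration model the cycle counts of distinct lengths are asymptotically independent Poissons (a fact that I would prove via the method of moments restricted to $l\leq L$, tracking only edge-disjoint cycles which suffices for a union bound), a Chernoff-type tail gives $\Pr(X_{\leq L}\geq \tau/2)\leq \exp(-\Omega(\tau\log(\tau/E[X_{\leq L}])))=\exp(-\Omega(n^{1-o(1)}))$, which is far smaller than $n^{-1.5}$. The main obstacle will be making the ``almost Poisson'' step rigorous; if the factorial-moment approach becomes cumbersome, an alternative is to directly first-moment-bound $t$-tuples of vertex-disjoint cycles of length $\leq L$ via the generating function $(\sum_{l\geq 3}C^l/(2l))^t/t!$, truncated at $L$, which also gives $o(n^{-1.5})$ once $t=\tau/2$ because of the $1/t!$ factor dominating $C^{tL}$.
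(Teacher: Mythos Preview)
Your treatment of (i) is essentially the paper's first-moment computation in the Poisson configuration model, but the line ``giving $\sum_{s\ge 4}2^{-s}=o(n^{-1.5})$'' is simply false (that sum equals $1/8$). The bound $(K(s/n)^{0.1})^s$ is far too weak for $s=O(1)$: after the $n^{0.5}$ transfer it is only $O(n^{0.5-0.1s})$, which for $s=4$ is $n^{0.1}$. The paper repairs this by using the threshold $r(s)=s+2$ (rather than $1.1s$) when $s\le(\log\log n)^8$; this buys an extra factor of $(s/n)^2$ per term and makes the small-$s$ contribution $O(n^{0.5}\cdot C^{(\log\log n)^8}/n^2)=o(n^{-1})$.

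For (ii) the paper does \emph{not} run a Chernoff bound on $D_S$. Instead it shows that w.h.p.\ at most $2^{3-i}n$ vertices have degree $\ge 2(k+\lambda)+i$ for every $i\ge1$, and then deterministically bounds the degree sum of \emph{any} set of size $\beta_1\gamma_1 n$ by stacking the highest-degree vertices into it. The stated hypothesis $[2(k+\lambda)+\log_2(\beta_1\gamma_1)+3]\beta_1<2(1-\beta_1)$ falls directly out of that sum; it is not a Chernoff exponent, and your MGF sketch would produce a different (though qualitatively similar) inequality, so the claim that this condition is ``precisely'' what your approach needs is not right.

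The substantive divergence is (iii). The paper's argument is short and uses no cycle-count moments at all: it leverages (i) to show that short cycles must be sparse. Call a cycle \emph{short} if its length is at most $(\log\log n)^6$. If some short cycle $C$ had three short cycles within distance $\log_{1.1}\log n$ of $V(C)$, the union would be a set of size $s\le 3(\log\log n)^6+2\log_{1.1}\log n$ spanning at least $s+2$ edges, contradicting (i). Hence at least a third of the short cycles have pairwise disjoint $(\log_{1.1}\log n)$-balls; since $\delta(G)\ge k\ge3$ and (i) forces $1.1$-expansion of small sets, each such ball has size at least $\log n$, so there are at most $3n/\log n$ short cycles. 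Long cycles trivially number at most $n/(\log\log n)^6$, and (iii) follows. Your route via Poisson approximation for cycle counts up to length $L\asymp\log n$ is genuinely harder to make rigorous: the joint Poisson limit in the configuration model is standard for bounded cycle lengths but delicate when the length grows with $n$, and your fallback of first-moment-bounding $t$-tuples of vertex-disjoint short cycles runs into the issue that with $t=n/(\log\log n)^6$ and $L\asymp\log n$ the total edge count $tL$ can exceed $m=cn$, so the pairing probabilities no longer factorise in the way your generating-function heuristic assumes. This is repairable (take $L$ of order $(\log\log n)^7$ and do the pairing estimate carefully), but you would be reinventing the short/long split without the clean structural shortcut the paper takes.
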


\begin{proof}
For $\ell\geq 0$ let $\Phi(2\ell)$ be the number of ways to partition  a $2\ell$ element set into pairs. Then, for $1\leq \ell \leq m$,
\begin{align}\label{eq:boundmatchings}
\frac{\Phi(2m-2\ell)\Phi(2\ell)}{\Phi(2m)}&= 
\frac{\frac{(2m-2\ell)!}{(m-\ell)!2^{m-\ell}}\frac{(2\ell)!}{\ell!2^{\ell}}}{\frac{(2m)!}{m!2^{m}}}
  =  \frac{(2\ell)!}{\ell!} \frac{m!(2m-2\ell)!}{(m-\ell)!(2m)!}\nonumber 
  \\& \leq (2\ell)^\ell \bfrac{1}{2(2m-2\ell)}^\ell 
  =\bfrac{\ell}{2m-2\ell}^\ell.
\end{align}

For $s\in [n]$ let $r(s)=s+2$ if $s \leq (\log\log n)^8$ and $r(s)=1.1s$ if $ (\log\log n)^8 < s \leq \gamma_1n$. 
Let $\cE$ be the event that there does not exists $ S\subset V(G)$ of size $|S| \leq \gamma_1 n$  that spans at least $r(|S|)$ edges.

{\emph{(i)}}
For $s\leq \gamma_1 n$, $r(s)$ is smaller or equal to the smaller integer that is larger or equal to $1.1s +1$. Thus \eqref{eq:models} implies that the probability of (i) not occurring is bounded by,
\begin{align}
\Pr&\bigg( \exists S\subset V(G):  |S| \leq \gamma_1 n  \text{ and } S \text{ spans  at least $r(|S|)$ edges} \bigg)\nonumber
    \\&\leq O(n^{0.5}) \sum_{s= 4}^{\gamma_1n}  \binom{n}{s}  \sum_{\substack{d_1,d_2,...,d_s\geq k\\ z_1\leq d_1,...,z_s\leq d_s     \\ z_1+...+z_s=2r(s)}} 
    \prod_{i=1}^s \frac{\lambda^{d_i}}{ d_i! f_{k}(\lambda)} \binom{d_i}{z_i} 
    \frac{\Phi(2m-2r(s))\Phi(2r(s))}{\Phi(2m)} \label{exp1}
    \\&\leq O(n^{0.5})  \sum_{s= 4}^{\gamma_1 n} \binom{n}{s}   \frac{\lambda^{2r(s)}}{f_{k}^s(\lambda)} \sum_{\substack{d_1,d_2,...,d_s\geq k\\ z_1\leq d_1,...,z_s\leq d_s\\ z_1+...+z_s=2r(s)}} 
    \prod_{i=1}^s \frac{\lambda^{d_i-z_i}}{(d_i-z_i)!} 
   \bfrac{r(s)}{2m-2r(s)}^{r(s)} \nonumber
    \\&\leq  O(n^{0.5})  \sum_{s=4}^{\gamma_1 n} \binom{n}{s}  \frac{\lambda^{2r(s)}}{f_{k}^s(\lambda)}  
  \bfrac{r(s)}{2m-2r(s)}^{r(s)}
    \sum_{D\geq 2r(s)} \sum_{\substack{z_1,z_2,...,z_s\geq 0\\z_1+...+z_s=2r(s)}}
      \frac{\lambda^{D-2r}s^{D-2r(s)}}{(D-2r(s))!} \label{exp2}
     \\&\leq O(n^{0.5})  \sum_{s= 4}^{\gamma_1n} \binom{n}{s}  \frac{\lambda^{2r(s)}}{f_{k}^s(\lambda)} \bfrac{r(s)}{2m-2r(s)}^{r(s)} \sum_{D\geq 2r(s)} \binom{2r(s)+s-1}{s-1}  \frac{(\lambda s)^{D-2r(s)}}{(D-2r(s))!} \nonumber
    \\&\leq   O(n^{0.5})   \sum_{s= 4}^{\gamma_1 n}  \bfrac{en}{s}^s \frac{\lambda^{2r(s)}}{f_{k}^s(\lambda)} \bfrac{r(s)}{2m-2r(s)}^{r(s)}
    \bfrac{e(2r(s)+s)}{s}^se^{s \lambda } \nonumber
    \\&\leq   O(n^{0.5})   \sum_{s= 4}^{\gamma_1n }  \bfrac{en}{s}^s \frac{\lambda^{2r(s)}}{f_{k}^s(\lambda)} \bfrac{r(s)}{2m-2r(s)}^{r(s)}
    9^se^{s \lambda } \nonumber
    \\&\leq  O(n^{0.5}) \sum_{s=  \frac{n}{(\log\log n)^2}}^{\gamma_1n} 
\bfrac{9e^{1+\lambda} \lambda^2}{cf_k(\lambda)}^s \bfrac{\lambda s}{cn}^{0.1s}+ o(n^{-1}) =o(n^{-1}). \nonumber
\end{align}

{\textbf{Explanation of \eqref{exp1}}} We first choose $s$ vertices $v_1,v_2,...,v_s$ in $\binom{n}{s}$ ways. Those vertices will span a subgraph $S$ with $r$ edges. 
The degree of $v_i$ in $G$ will be $d_i$, this occurs with probability $\prod_{i=1}^s\frac{\lambda^{d_i}}{d_i! f_k(\lambda)}$, and its degree in $S$ will be $z_i$. Then, for each vertex $v_i$ we choose a set of $z_i$ out of the $d_i$ copies of $v_i$. The last term is the probability that those copies induce $\sum_{i=1}^sz_i/2$ edges when we pass form the sequence in $[n]^{\sum_{i\in [n]}d(i) }$ to the corresponding graph.

To derive \eqref{exp2} we used the following identity. For fixed $z_1,z_2,...,z_s$ if $\sum_{i=1}^s z_i=2r$ and $\sum_{i=1}^s d_i-z_i=D-2r$ then  $\sum_{\substack{z_1\leq d_1,...,z_s\leq d_s\\ d_1+...+d_s=D \\ z_1+z_2+...+z_s=2r}}\frac{(D-2r)!}{\prod_{i=1}^s(d_i-z_i)!}= s^{D-2r}$. 

{\emph{(ii)}} Let $Y\sim Po_{\geq k}(\lambda)$. Then, for $i\geq 0$
\begin{align*}
    \Pr(Y\geq 2(\lambda+k)+i)
    \leq \frac{\frac{\lambda^{2(\lambda+k)+i}}{[2(\lambda+k)+i]!}}{\frac{\lambda^{2(\lambda+k)}}{[2(\lambda+k)]!}} \leq  \frac{\lambda^{i}}{[2(\lambda+k)]^i)}\leq 2^{-i}.
\end{align*}
Thus, \eqref{eq:models} implies,
\begin{align*}
    \Pr&(\exists i\geq 1: \text{ there exists more than $2^{3-i}n$ vertices in $G$ of degree } 2(\lambda+k)+i)
    \\&\leq O(n^{0.5})\sum_{i\geq 1 }\Pr(Binomial(n,2^{-i})\geq 2^{3-i}n) \leq O(n^{0.5})\sum_{i\geq 1 } \binom{n}{2^{3-i}n} (2^{-i})^{2^{3-i}n}
    \\&\leq O(n^{0.5})\sum_{i\geq 1 } \bfrac{ en \cdot 2^{-i}}{2^{3-i}n}^{2^{3-i}n}=o(n^{-1}).
\end{align*}
Hence with probability $1-o(n^{-1})$ any set of at most $\beta_1 \gamma_1 n$ vertices spans at most 
\begin{align*}
    &\sum_{i\geq  0} [2(k+\lambda) - \log_2(\beta_1\gamma_1)+i+1]\beta_1\gamma_1 n \cdot 2^{-i} 
    \\&\leq [2(k+\lambda) - \log_2(\beta_1\gamma_1)+3]\beta_1\gamma_1 n
    \leq 2(1-\beta_1)\gamma_1 n
\end{align*}
edges.

{\emph{(iii)}} 
For a set $S$ and an integer $k\in \mathbb{Z}_{\geq 0}$ denote by $N_{\leq k}(S)$ the set of vertices that are at distance at most $k$ from some vertex in $S$. Call a cycle \emph{small} if it has size at most $(\log\log n)^6$ and let $X_{small}$ be the number of small cycles in $G$. In the event $\cE$ we have that there does not exists a small cycle $C$ such that $N_{\leq \log_{1.1}\log n}(V(C))$ spans more than 3 small cycles (including $C$) as such a cycle would give rise to a set of size $s\leq 3(\log\log n)^6+2\log_{1.1}\log n$ that spans $s+2$ edges. Therefore in the event $\cE$ there exists a set $\cC$ of at least $X_{small}/3$  cycles such that the sets $N_{\leq \log_{1.1}\log n}(V(C)), C\in \cC$ are disjoint. Furthermore, in the event $\cE$, as $G$ has minimum degree at least 3 we have that 
 $$|N_{\leq \log_{1.1}\log n}(V(C))| \geq 1.1^{\log_{1.1}\log n} \geq \log n \text{ for } C\in \cC$$
and therefore $X_{small}\leq \frac{3n}{\log n}$. Hence with probability at least $\Pr(\cE)=1-o(n^{-1})$  any set of vertex disjoint cycles in $G$ consists of at most $\frac{3n}{\log n}+\frac{n}{(\log n\log n)^6} \leq \frac{2n}{(\log \log n)^6}$ many cycles.
\end{proof}

\section{Proof of Lemma \ref{lem:expansion2}}\label{app:lemmas2}
Recall, Lemma \ref{lem:expansion2} states,

\begin{lemma}
W.h.p.  for $k^{100}n\leq i\leq n\log n$,
\begin{itemize}
    \item[(i)] every set $S\subset V(G)$ of size  $|S| \leq \frac{3n}{\log^2 n}$  spans less than $1.1|S|+1$ edges in $G_i$,
\item[(ii)] there does not exists a set $S\subset V(S)$ of size  $\frac{n}{\log^2 n} \leq |S| \leq \frac{n}{100k}$ such that $N(S)\cup S$ induces a connected subgraph of $G_i$ and  $|N(S)| < k|S|$,
\item[(iii)] $|V(G_i^{(k)})| \geq (1-e^{\frac{i}{40n}})n$,
\item [(iv)] for every subgraph $F$ of $G_i$ of maximum degree $k-2$ the graph $G_i^{(k)}\setminus F$ spans a matching of size at least $0.5|V(G_i^{(k)})|-\frac{n}{\log\log n}$.
\end{itemize}
\end{lemma}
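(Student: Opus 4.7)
My plan is to handle the four items separately, reducing each to a first-moment or union-bound argument on $G_i$ (viewed essentially as $G_{n,p_i}$ with $p_i\approx 2c_i/n$, where $c_i:=i/n\in[k^{100},\log n]$). The key advantage throughout is that $c_i$ is at least polynomial in $k$, so losses from the union bounds and from the adjustment between $G_i$ and $G_{n,p_i}$ are easily absorbed. Since the range of $i$ has size $O(n^2)$, I would freely union-bound over $i$.

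For (i), the direct first-moment estimate $\Pr(S\text{ spans }\ge 1.1s+1\text{ edges})\le\binom{\binom{s}{2}}{1.1s+1}p_i^{1.1s+1}$, multiplied by $\binom{n}{s}\le(en/s)^s$ and summed over $s\le 3n/\log^2 n$, gives $o(n^{-3})$ per $i$, using that $sp_i\le 6/\log n=o(1)$. Part (ii) is the analogous but slightly more delicate calculation: I would union-bound over $S$ of size $s\in[n/\log^2 n,n/(100k)]$ and $T\supseteq N_{G_i}(S)$ of size $\le ks$, using the tree bound $w^{w-2}p_i^{w-1}$ with $w:=|S\cup T|$ to pay for the connectivity of $S\cup T$, and $(1-p_i)^{s(n-w)}\le e^{-c_is}$ to pay for the absence of edges leaving $S\cup T$. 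Since $c_i\ge k^{100}$, the $e^{-c_is}$ factor dominates the combinatorial $O((kc_i)^{(k+1)s})$ factor, leaving $o(n^{-3})$ per $i$.

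For (iii), I would start from the peeling observation: every $u\in U:=V\setminus V(G_i^{(k)})$ has fewer than $k$ neighbors in $V\setminus U$ in $G_i$. A Chernoff bound gives that the set $L$ of vertices of $G_i$-degree less than $c_i/2$ satisfies $|L|\le ne^{-c_i/20}$ w.h.p., so every $u\in U\setminus L$ has at least $c_i/3$ neighbors inside $U$. I would then case-split on the size of the largest connected component $U_0$ of $G_i[U\setminus L]$: if $|U_0|\le n/\log^2 n$, then $G_i[U_0]$ spans $\Omega(c_i|U_0|)$ edges, contradicting (i); if $n/\log^2 n\le|U_0|\le n/(100k)$, then $|N_{G_i}(U_0)|<k|U_0|$ with $U_0\cup N(U_0)$ connected, contradicting (ii); and if $|U|\ge n/(100k)$, then the number of $U$-to-$(V\setminus U)$ edges in $G_i$ is at most $(k-1)|U|$, far below its Chernoff-concentrated expectation $\Omega(c_i|U|)$. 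Hence $U\subseteq L$ up to lower-order terms, and $|U|\le ne^{-c_i/20}\le ne^{-i/(40n)}$.

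For (iv), I would apply the Tutte-Berge formula to $G^*:=G_i^{(k)}\setminus F$, which has minimum degree $\ge 2$ on $V(G_i^{(k)})$ because $F$ has max degree $k-2$. Let $S^*$ be a maximum-size maximizer of $o(G^*-S)-|S|$; as in Lemma \ref{lem:pillmatch}, odd components of $G^*-S^*$ are either singletons or contain a cycle, and (i) controls the number of non-singleton components through a vertex-disjoint-cycles bound inside sets of moderate size. For the set $X$ of singleton odd components, each $v\in X$ has all its $G^*$-neighbors in $S^*$, so its $G_i$-neighborhood is contained in $S^*\cup N_F(v)\cup U$, which forces at least two $G^*$-edges from $v$ into $S^*$ (using $d_{G_i^{(k)}}(v)\ge k$ and $|N_F(v)|\le k-2$). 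Applying (ii) to a connected component of $G_i[X\cup S^*]$ of size in the applicable range then forces $|X|\le|S^*|+o(n/\log\log n)$; the sub-ranges where (ii) does not directly apply (sets of size $<n/\log^2 n$ or $>n/(100k)$) are handled respectively via (i) and via the edge-count/(iii) argument of the previous paragraph. The main obstacle is precisely (iv): because $F$ is adversarial, no further randomness of $G_i$ can be revealed once $F$ is fixed, so the whole argument must be routed through the deterministic properties (i)--(iii), and combining these uniformly to bound $o(G^*-S^*)-|S^*|$ is the delicate step.
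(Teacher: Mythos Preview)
For parts (i) and (ii) your plan matches the paper's almost exactly: a first-moment bound over small sets (for (i), taking $i=n\log n$ suffices by monotonicity) and, for (ii), a union over $(S,T)$ with a spanning-tree factor $(s+t)^{s+t-2}p_i^{s+t-1}$ for connectivity and a factor $(1-p_i)^{s(n-s-t)}$ for the missing edges.

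For (iii) you take a different route from the paper. The paper does a standalone first-moment: if $|V(G_i^{(k)})|$ is too small then there is a set $S$ of the complementary size in which every vertex has at most $k-1$ neighbours outside $S$, and one bounds $\binom{n}{|S|}\bigl[\Pr(\mathrm{Bin}(n,p_i)\le k-1)\bigr]^{|S|}$ directly. Your reduction to (i)--(ii) via the peeling characterisation and a case split on component sizes of $G_i[U\setminus L]$ is a valid alternative; it is arguably more structural, but it costs an extra w.h.p.\ bound on $\Delta(G_i)$ (to control edges from $U_0$ into $L$) and a Chernoff-plus-union step over all $U$ in the large-$|U|$ case.

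For (iv) there is a genuine divergence, and your sketch has a gap. You assume that because $F$ is adversarial one must route the whole argument through the deterministic (i)--(iii). The paper instead observes that the existentially-quantified bad event ``$\exists F$ of maximum degree $k-2$ with $\alpha'(G_i^{(k)}\setminus F)$ small'' can, via Tutte--Berge, be rewritten as an $F$-free structural event in $G_i$: the existence of disjoint $S,T$ of equal size $s\ge n/(2\log\log n)$ such that every $v\in S$ has at least two $G_i$-neighbours in $T$ and at most $k-2$ in $[n]\setminus T$ (take $T\supseteq S^*\cup(V\setminus V(G_i^{(k)}))$ and $S\subseteq X$, using (iii) to keep $T$ small). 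One then first-moments over $(S,T)$ directly, the factor $(1-p_i)^{s(n-2s)-(k-2)s}$ dominating. This sidesteps the obstacle you flag.

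By contrast, your proposed use of (ii) on components of $G_i[X\cup S^*]$ does not obviously give $|X|\le|S^*|+o(n/\log\log n)$. Statement (ii) yields $|N_{G_i}(S)|\ge k|S|$ only when $S\cup N_{G_i}(S)$ is connected and $|S|$ lies in the stated window; for $S=X\cap C$ with $C$ a component of $G_i[X\cup S^*]$, neither the connectivity of $S\cup N_{G_i}(S)$ nor the containment $N_{G_i}(S)\subseteq S^*\cap C$ (which you would need to turn $k$-expansion into a bound $|X\cap C|\le c\,|S^*\cap C|$) is available: vertices of $X$ may have $F$-neighbours and $U$-neighbours outside $S^*$, and the $X$-part of a component need not fall in the size range of (ii). Since $G^*$ has no uniform maximum-degree bound, the degree-counting trick of Lemma~\ref{lem:pillmatch} is also unavailable here. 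The paper's $F$-free encoding avoids all of this.
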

\begin{proof} {\emph{(i)}}
Let $\mathcal{S}_i$ be the event that there exists $ S\subset[n]$ of size $|S| \leq \frac{3n}{\log^2 n}$ that  spans at least  $\lceil 1.1|S|+1 \rceil \geq |S|+2$ edges in $G_i$. Then,
\begin{align*}
&\Pr\big(\exists i \in [k^{100}n, n\log n]: \mathcal{S}_i  \text{ occurs}\big)=\Pr( \mathcal{S}_{n\log n}  \text{ occurs})
\\&\leq\sum_{s=4}^{\frac{3n}{\log^2 n}} \binom{n}{s} \binom{s^2}{\lceil1.1s+1 \rceil} \bfrac{3\log n}{n}^{\lceil1.1s+1 \rceil}
\leq\sum_{s=4}^{\frac{3n}{\log^2 n}} \bfrac{en}{s}^s \bfrac{es\log n}{1.1n}^{\lceil1.1s+1 \rceil}=o(1).
\end{align*}
\emph{(ii)} For $i\geq 0$ let $p_i=2i/n(n-1)$.
\begin{align*}
    \Pr(\neg(ii))&\leq O(n^{0.5})\sum_{i=k^{100}n}^{n\log n} \sum_{s=\frac{n}{\log^2 n} }^{\frac{n}{100k}} \sum_{t=1}^{ks}   \binom{n}{s+t}\binom{s+t}{s} (s+t)^{s+t-2}p_i^{s+t-1}(1-p_i)^{s(n-s-t)} 
    \\&\leq  \sum_{i=k^{100}n}^{n\log n} \sum_{s=\frac{n}{\log^2 n} }^{\frac{n}{100k}} \sum_{t=1}^{ks}   (2en)^{s+t}p^{s+t-1}_ie^{-0.5snp_i}
    \leq  \sum_{i=k^{100}n}^{n\log n} \sum_{s=\frac{n}{\log^2 n} }^{\frac{n}{100k}} ks   (2enp_i)^{(k+1)s}p_i^{-1}e^{-0.5snp_i}
\\&  = \sum_{i=k^{100}n}^{n\log n} \sum_{s=\frac{n}{\log^2 n} }^{\frac{n}{100k}} \frac{ks}{p_i} \bigg( (2enp_i)^{(k+1)}e^{-np_i}\bigg)^s=o(1).
\end{align*}
\emph{(iii)}
In the event that $|V(F_{i/k}^{(k)})| \leq (1-e^{\frac{i}{40n}})n$ then there exists $S\subset [n]$ of size $e^{-\frac{i}{40kn}} n$ such that every vertex in $S$ has at most $k-1$ neighbors in $[n] \setminus S$. 
Therefore,
\begin{align*}
    \Pr(\neg (iii)) &\leq \sum_{i=k^{100}n}^{n\log n} \binom{n}{e^{-i/40kn}n}\bigg[\sum_{j=0}^{k-1}\binom{n}{j}p_i^j(1-p_i)^{(1-e^{-i/40kn}n-j)}\bigg]^{e^{-\frac{i}{40kn}n} }
    \\&\leq \sum_{i=k^{100}n}^{n\log n}\bigg(e^{(1+i/40kn)}(np_i)^ke^{-0.9np_i}\bigg)^{e^{-\frac{i}{40kn}n}}=o(1).
\end{align*}
\emph{(iv)} For a subgraph $F$ of $G_{n\log n}$ of maximum degree $k-2$ let $\mathcal{M}(F,i)$ be the event
that the graph $G_i^{(k)}\setminus F$ does not span a matching of size $0.5|V(G_i^{(k)})|-\frac{n}{\log\log n}$. As in the proof of Lemma \ref{lem:expansion}, part (i) of Lemma \ref{lem:expansion2} implies that w.h.p. there does not exists $i\leq n\log n$ such that $G_i$ spans a set of $\frac{n}{(\log\log n)^2}$ edge disjoint cycles. Thereafter as in the proof of Lemma \ref{lem:pillmatch} we have that w.h.p. the event $\mathcal{M}(F,i)$ implies that there exist $s\geq \frac{n}{2\log\log n}$ and disjoint sets $S,T\subset [n]$ of size $s$ such that in $G_i$ every vertex in $S$ has at least 2 neighbors in $T$ and at most $k-2$ neighbors in $[n]\setminus T$ (these neighbors are defined by $F$). Therefore,
\begin{align*}
    &Pr( \mathcal{M}(F,i) \text{ occurs  for some pair }F,i)
    \\&\leq \sum_{i=k^{100}n}^{n\log n}\sum_{s=\frac{n}{2\log\log n}}^{n/2} \binom{n}{2s}2^{2s} \bigg[\binom{s}{2}p_i^2\bigg]^s
    \bigg[\sum_{j=0}^{k-2}\binom{n}{j}p_i^j\bigg]^s(1-p_i)^{\binom{s}{2}+(n-2s)s-(k-2)s}
    \\&\leq \sum_{i=k^{100}n}^{n\log n}\sum_{s=\frac{n}{2\log\log n}}^{n/2} \bfrac{enp_i}{2}^{2s}
(np_i)^{(k-2)s}e^{-0.1snp_i}
\\&= \sum_{i=k^{100}n}^{n\log n}\sum_{s=\frac{n}{2\log\log n}}^{n/2} \bigg[ \bfrac{e^2(np_i)^k e^{-0.1np_i}}{4} \bigg]^s=o(1).
\end{align*}

\end{proof}

\end{appendices}

\end{document}